\theoremstyle{plain}
\newtheorem{thm}{Theorem}[section]
\newtheorem{lem}[thm]{Lemma}
\newtheorem{cor}[thm]{Corollary}
\newtheorem{prop}[thm]{Proposition}
\newtheorem*{fact}{Fact}
\theoremstyle{remark}
\newtheorem*{remark}{Remark}
\title{\Large \bf \boldmath\ \\  The boundary correspondence under quasiconformal mappings and VMO-Teichm\"{u}ller  space$^{\ast}$}
\author{\large  Tailiang LIU$^1$\qquad  Yuliang SHEN$^{2}$}
\date{}
\begin{document}

\maketitle

\renewcommand{\thefootnote}{\fnsymbol{footnote}}

\footnotetext{\hspace*{-5mm} \begin{tabular}{@{}r@{}p{13.4cm}@{}}
$^1$ & School of Mathematics and Physics, Jiangsu University of Technology, Changzhou 213001, P. R. China.\\
&{E-mail:}  ltlmath@163.com\\
$^{2}$ & Department of Mathematics, Soochow University,
Suzhou 215006, P. R. China.\\
&{E-mail:}   ylshen@suda.edu.cn\\
 $^{\ast}$ & Project supported by
  the National Natural Science Foundation of China (Nos. 12401095 and 12171346).
\end{tabular}}

\renewcommand{\thefootnote}{\arabic{footnote}}

\begin{abstract}
  In this paper, we introduce  a class of  vanishing Carleson measures  with conformal invariance and corresponding strongly vanishing symmetric homeomorphisms  on the real line and prove that they can be mutually generated under  quasiconformal mappings. This is    motivated by  constructing  a nice  {VMO}-Teichm\"{u}ller  space on the real line, which completely removes the obstacle of the usual VMO-Teichm\"{u}ller  space that lacks conformal invariance and is repeatedly encountered in the papers \cite{Sh,WM1,WM2,WM3}.

\vskip 4.5mm
{\bf Keywords } BMO, BMO-Teichm\"uller space, Carleson measure, Quasiconformal mapping, Quasisymmetric homeomorphism, Vanishing Carleson measure, VMO, VMO-Teichmuller space.
\vskip 4.5mm
{\bf 2020 Mathematics Subject Classification } 30C62, 30F60, 30H35.
\end{abstract}

 \setlength\abovedisplayskip{5pt} \setlength\belowdisplayskip{5pt}

\section{Introduction}

We first fix some basic notations, more detailed definitions can be found in the next section. Let $\mathbb{H}$ and $\mathbb{L}$ denote  the upper and lower half plane in the complex plane $\mathbb{C}$ respectively,  $\mathbb{D}$ and $\mathbb{D}^*$ denote  the interior and exterior of the unit circle $S^1$. Hom$^+(\mathbb{R})$ denotes the group of all sense-preserving homeomorphisms of the real line  $\mathbb{R}$ onto itself. In addition, unless otherwise specified, we always  restrict the various concepts  to  $\mathbb{R}$ or $\mathbb{H}$ in the following text.

Given  a quasiconformal (q.c.)  mapping $\rho$ of the upper half plan $\mathbb{H}$  onto itself,  a basic problem is to understand how  boundary properties of $\rho$ are reflected in its complex dilatation $\mu=\mu_{\rho}$.
 Beurling and Ahlfors  in \cite{BA} proved that  a homeomorphism $h\in\text{Hom}^+(\mathbb{R})$  is  quasisymmetric if and only if there exists some q.c. map  of $\mathbb{H}$ onto itself which has boundary values $h$. Here, a homeomorphism $h\in\text{Hom}^+(\mathbb{R})$  is quasisymmetric and belongs to the class QS$(\mathbb{R})$ if  there exists a  positive constant $C$  such that $C^{-1}\leq\Delta(x,t)\leq C $ for all $x\in\mathbb{R}$ and $t>0$, where
\[\Delta(x,t)\doteq\frac{h(x+t)-h(x)}{h(x)-h(x-t)}.\]
The set of all normalized quasisymmetric homeomorphisms (say, fix the points $0$ and $1$), denoted by $T=\text{QS}(\mathbb{R})/\sim$ , is one of the models of the universal Teichm\"{u}ller space,  a universal parameter space of for all Riemann surfaces.
A  homeomorphism $h$ in QS$(\mathbb{R})$ is  symmetric and belongs to the class S$(\mathbb{R})$ if $\lim_{t\rightarrow0}\Delta(x,t)=1$ uniformly for all $x\in\mathbb{R}$.
This class was first studied  by Carleson in \cite{Ca} to discuss the absolute continuity of a quasisymmetric homeomorphism. It was investigated in depth later by Gardiner and Sullivan in \cite{GS} to study little and  asymptotic Teichm\"{u}ller spaces.  In particular, it was proved that $h\in \text{S}(\mathbb{R})$ if and only if $h$ can be extended as an asymptotically conformal mapping $f$ to $\mathbb{H}$, and that the Beurling-Ahlfors extension of $h$ is asymptotically conformal when $h$ is symmetric (see\cite{Ca,GS}).
The meaning of asymptotically conformal mapping here is that its complex  dilatation  tends to zero at the boundary.  The symmetric Teichm\"{u}ller space is defined as $T_{*}(\mathbb{R})=\text{S}(\mathbb{R})/\sim$.

In this paper, we will introduce and discuss a new subclass of QS$(\mathbb{R})$, which we call {strongly vanishing symmetric homeomorphisms}. For this purpose, let us recall some related  notions.
A   homeomorphism $h\in\text{Hom}^+(\mathbb{R})$  is called strongly qusisymmetric  in the sense of Semmes \cite{S2} and belongs to the class SQS$(\mathbb{R})$, if there exist $0<C_{1}, C_{2}<\infty$ such that for all intervals $I$ in $\mathbb{R}$  and all measurable subsets $A$ of $Q$ we have
\[{|h(A)|}/{|h(I)|}<C_{1}(|A|/|I|)^{C_{2}}.\] Equivalently, $h\in\text{SQS}(\mathbb{R})$   if and only if $h$ is locally absolutely continuous so that $h'$ belongs to the class $A_{\infty}$ weights  introduced by Muckenhoupt (see \cite{CF}), in particular, $\log h'\in$ BMO$(\mathbb{R})$,  the space  of functions on
$\mathbb{R}$ of bounded mean oscillation .    The class  is much investigated  because of its great importance in the applications to the  BMO theory of the universal Teichm\"{u}ller space and elliptic operator theory.  It was proved that   $h\in \text{SQS}(\mathbb{R})$   if and only if it can be extended as a q.c. map on $\mathbb{H}$  with complex dilatation $\mu$ such that $|\mu(z)|^2/ydxdy$ is a Carleson measure (see \cite{FKP,S2}). It is known that  SQS$(\mathbb{R})$ is a subgroup of $\text{QS}(\mathbb{R})$, and $T_b(\mathbb{R})=\text{SQS}(\mathbb{R})/\sim$  is called (the real line model of ) the BMO-Teichm\"{u}ller space.
Now, we say that a homeomorphism $h\in\text{Hom}^{+}(\mathbb{R})$ is strongly symmetric and belongs to the class
SS$(\mathbb{R})$ if  $h\in\text{SQS}(\mathbb{R})$ and $\log h'\in\text{VMO}(\mathbb{R})$, the space of functions on
$\mathbb{R}$ of vanishing mean oscillation (see \cite{Gar}).  This class was introduced by Shen  in \cite{Sh}  where the original VMO-Teichm\"{u}ller space $T_{v}(\mathbb{R})=$SS$(\mathbb{R})/\sim$ is  investigated on the real line. It was proved that $h\in \text{SS}(\mathbb{R})$ if and only if $h$ can be extended as a q.c. mapping to  $\mathbb{H}$ whose complex dilatation $\mu$ induces a   vanishing Carleson measure $|\mu(z)|^2/ydxdy $ (see \cite{Sh,WM2}). We may define {SQS}$(S^1)$ and {SS}$(S^1)$ in the same way and have a parallel BMO theory of the universal Teichm\"uller space on the  unit circle  which was  developed in the papers \cite{AZ} and \cite{SW}.

Denoting by  CMO$(\mathbb{R})$ the completion of continuous functions on the real line with compact support in the $ \text{BMO} $ topology, while  the vanishing mean oscillation function space $\text{VMO}(\mathbb{R})$ we usually refer to is not subjected to the compactness.  The  space CMO$(\mathbb{R})$ has been studied for a rich harmonic analysis history because of its irreplaceability (see  for example \cite{BLS,Bo,CW,Ne}).
Now, we say that a homeomorphism $h\in\text{Hom}^{+}(\mathbb{R})$  is   strongly vanishing symmetric and belongs to the class $\text{SS}_{0}(\mathbb{R})$  if $h\in\text{SQS}(\mathbb{R})$  and $\log h'\in$ CMO$(\mathbb{R})$. Then  $\text{SS}_{0}(\mathbb{R})\subseteq \text{SS}(\mathbb{R})$.

 We shall be concerned with the boundary correspondence problem of characterizing $\text{SS}_{0}(\mathbb{R})$ under  q.c. extensions. Our main result can be stated as

 \begin{thm} \label{1.1}
	A homeomorphism $h\in\text{Hom}^{+}(\mathbb{R})$ is strongly vanishing symmetric if and only if it can be extended as a q.c. mapping to  $\mathbb{H}$ whose complex $\mu$ induces    a strongly vanishing Carleson measure $|\mu(z)|^2/ydxdy $ on $\mathbb{H}$.
\end{thm}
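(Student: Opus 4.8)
The plan is to prove the two implications separately, in each case upgrading the corresponding $\mathrm{VMO}$ statement of \cite{Sh,WM2} by the extra information carried by $\mathrm{CMO}(\mathbb R)$. Recall the analytic meaning of that class: $u\in\mathrm{BMO}(\mathbb R)$ lies in $\mathrm{CMO}(\mathbb R)$ exactly when its mean oscillation $\tfrac1{|I|}\int_I|u-u_I|$ over an interval $I$ tends to $0$ simultaneously as $|I|\to0$, as $|I|\to\infty$, and as $I$ recedes to $\infty$; likewise a vanishing Carleson measure on $\mathbb H$ is \emph{strongly} vanishing exactly when the normalized mass $\lambda(Q_I)/|I|$ over a Carleson box $Q_I$ tends to $0$ in these same three regimes, and this three-fold vanishing is what is preserved under the Cayley transform, hence the conformal invariance. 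Thus in both directions the only content beyond \cite{Sh,WM2} is the behaviour at \emph{large} scales and \emph{near} $\infty$.

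\emph{Necessity.} Assume $h\in\mathrm{SS}_0(\mathbb R)$, so $h\in\mathrm{SQS}(\mathbb R)$ and $u:=\log h'\in\mathrm{CMO}(\mathbb R)$. Take for the extension a (smoothed) Beurling--Ahlfors type extension $F=F_h$ of $h$ as in \cite{WM2}. Since $u\in\mathrm{VMO}(\mathbb R)$ a fortiori, \cite{Sh,WM2} already gives that $\lambda_F:=|\mu_F(z)|^2y^{-1}\,dx\,dy$ is a vanishing Carleson measure, so it remains only to control $\lambda_F(Q_I)/|I|$ for $I$ long and for $I$ far from the origin. For this I would use the same pointwise bound that underlies \cite{WM2}: at $z=x+iy$ the dilatation $|\mu_F(z)|$ is dominated by the deviation of $h$ from its affine interpolant on $[x-y,x+y]$, hence by the local mean oscillation of $u$ on a fixed dilate of that interval; squaring and integrating over $Q_I$ then gives, schematically,
\[
\frac{1}{|I|}\,\lambda_F(Q_I)\ \lesssim\ \sup_{J}\Big(\frac{1}{|J|}\int_J|u-u_J|\Big)^{2},
\]
the supremum being over subintervals $J$ of a bounded dilate of $I$. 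By $u\in\mathrm{CMO}(\mathbb R)$ the right-hand side tends to $0$ also when $|I|\to\infty$ and when $I$ recedes to $\infty$; hence $\lambda_F$ is strongly vanishing.

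\emph{Sufficiency.} Conversely, suppose $h=f|_{\mathbb R}$ for a q.c.\ self-map $f$ of $\mathbb H$ whose dilatation $\mu=\mu_f$ makes $\lambda_f:=|\mu(z)|^2y^{-1}\,dx\,dy$ a strongly vanishing Carleson measure. A fortiori $\lambda_f$ is a vanishing Carleson measure, so \cite{Sh,WM2} gives $h\in\mathrm{SS}(\mathbb R)$, i.e.\ $u=\log h'\in\mathrm{VMO}(\mathbb R)$. To improve this to $u\in\mathrm{CMO}(\mathbb R)$ I would refine the proof of the $\mathrm{SS}$ case: the argument there bounds the mean oscillation of $u$ over an interval $I$ in terms of the normalized mass $\lambda_f(Q_{\tilde I})/|\tilde I|$ over a comparable box $\tilde I$, up to lower-order remainders, via the integral representation of $\log f'$ on $\mathbb R$ coming from solving the Beltrami equation (a Cauchy-transform / reflection argument of the kind used for the $\mathrm{BMO}$ theory in \cite{FKP,S2,AZ}). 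Keeping track of this bound for $I$ long and for $I$ far, and using that $\lambda_f$ is strongly vanishing, forces the mean oscillation of $u$ on $I$ to decay in those regimes as well, so $u\in\mathrm{CMO}(\mathbb R)$ and $h\in\mathrm{SS}_0(\mathbb R)$.

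\emph{The main obstacle.} The genuinely new difficulty, relative to \cite{Sh,WM2}, is that in the $\mathrm{VMO}$ theory only the small-scale regime is used and every estimate can be localized freely, whereas here one must keep both comparisons above \emph{uniform} as the Carleson box grows without bound or escapes into a neighbourhood of $\infty$, and compatible with the normalization $f(0)=0,\ f(1)=1$. Concretely, the heart of the matter is a two-sided comparison, uniform over \emph{all} intervals $I\subseteq\mathbb R$ --- small, long, and far --- between the local mean oscillation of $\log h'$ on $I$ and the normalized Carleson mass of the dilatation over a bounded dilate of $I$, together with the verification that this comparison is stable under the Cayley transform so that the resulting class is genuinely conformally invariant; it is the ``long/far'' part of this that requires new work. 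One might instead hope to transfer the known circle statements of \cite{AZ,SW} through the Cayley transform, since a strongly vanishing Carleson measure on $\mathbb H$ corresponds under that map to a vanishing Carleson measure on $\mathbb D$; but carrying the q.c.\ extension across the transform reintroduces precisely the same uniformity-at-$\infty$ difficulties, so this does not really avoid the obstacle.
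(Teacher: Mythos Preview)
Your plan has genuine gaps in both directions.

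\textbf{Necessity.} The schematic inequality
\[
\frac{1}{|I|}\,\lambda_F(Q_I)\ \lesssim\ \sup_{J\subset CI}\Big(\frac{1}{|J|}\int_J|u-u_J|\Big)^{2}
\]
does not yield the large-scale vanishing. When $|I|\to\infty$ the supremum on the right ranges over \emph{all} subintervals of a dilate of $I$, including short ones, and hence remains comparable to $\|u\|_*^2$; it does not tend to $0$. A pure pointwise bound $|\mu_F(x+iy)|\lesssim$ (local oscillation of $u$) is in fact useless for controlling $\int_{Q_I}|\mu_F|^2 y^{-1}$, since $\int_0^{|I|} y^{-1}\,dy$ diverges. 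The paper's argument is different: following Semmes one first factors $h=k_n\circ\cdots\circ k_1$ with each $\|\log k_N'\|_*$ small (so that the extension is genuinely q.c.\ and the relevant estimates apply), and then uses the $L^4$-boundedness of the Littlewood--Paley $g$-function and of the maximal function to obtain
\[
\frac{1}{|I|}\int_{Q_I}\frac{|\mu_N|^2}{y}\,dm\ \lesssim\ M_{8,\log k_N'}^{1/4}(3I),
\]
a \emph{single} higher-order mean oscillation on $3I$, which does vanish in all three regimes by the $L^r$-characterization of $\mathrm{CMO}$. One then composes the pieces back using the group structure of the target class, which in turn is proved by passing through the Cayley transform and the known $\mathrm{SS}(S^1)$ theory.

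\textbf{Sufficiency.} The proposed direct comparison ``mean oscillation of $\log h'$ on $I$ $\lesssim$ normalized Carleson mass of $\mu$ over a comparable box'' is not available in the literature and you give no mechanism for it; the Cauchy-transform/reflection heuristics of \cite{FKP,S2} produce $\log g'$ for a \emph{conformal} map $g$, not $\log h'$ directly. The paper proceeds very differently: it solves the conformal welding $h=f_\mu^{-1}\circ g_\mu$, transfers the strongly vanishing Carleson hypothesis through the Cayley map to invoke the circle theory (precisely the route you dismiss) and obtain $|S_{g_\mu}|^2|y|^3\in\mathrm{CM}_s(\mathbb L)$, and then carries out the substantial new estimate --- controlling
\[
J(\zeta)=\int_{\mathbb H}|N_g(z)|^2\,y\,|\gamma_\zeta'(z)|\,dm(z)
\]
uniformly for $\zeta$ outside large compacta --- to conclude $\log g_\mu'\in\mathrm{CMOA}$. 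This step is the technical core and has no analogue in your outline. The group property is used again to get $\log f_\mu'\in\mathrm{CMOA}$, and finally $\log h'=\log g_\mu'-\log f_\mu'\circ h\in\mathrm{CMO}$ since $P_h$ preserves $\mathrm{CMO}$.

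In short, both directions in the paper rely essentially on (i) the factorization/composition trick together with the group structure proved via the Cayley transform, and (ii) square-function rather than pointwise control; your plan supplies neither.
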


 Another goal of the paper is (to use  Theorem \ref{1.1})   to remove the  obstacle of $T_{v}(\mathbb{R})$ that lacks conformal invariance and  is repeatedly encountered in the articles \cite{Sh,WM1,WM2,WM3}.  For that,  we shall describe some details of what is conformal invariance of  Teichm\"{u}ller spaces,  which also  play a fundamental role in  the rest of this paper.

 Let $\text{CM}(\Omega)$ and $\text{CM}_{0}(\Omega)$  denote the set of all Carleson measures and vanishing Carleson measures defined on the simple connected domain $\Omega$, respectively.
  It is known that  a positive  measure  $ \lambda\in\text{CM}(\mathbb{D}) $ if and only if $(\gamma^*\lambda)/|\gamma'| \in\text{CM}(\mathbb{H})$ (see \cite{Gar}), and we call this conformal invariance of Carleson measures. Here and in what follows  $\gamma(z)$ always denotes the Cayley transform $(z-i)/(z+i)$ from the upper half plane $\mathbb{H}$ onto the unit disk $\mathbb{D}$. This directly leads to the equivalence of $|\mu_{H}(w)|^2/(1-|w|^2)dudv\in\text{CM}(\mathbb{D})$ and $|\mu_{\gamma^{-1}\circ H\circ\gamma}(z)|^2/ydxdy\in\text{CM}(\mathbb{H})$, where $H$ is a q.c. map of $\mathbb{D}$ onto itself.
   Consequently,  we obtain that for a homeomorphism $h\in\text{Hom}^{+}(\mathbb{R})$, $h\in\text{SQS}(\mathbb{R})$ if and only if $\gamma\circ h\circ\gamma^{-1}\in\text{SQS}(S^1)$ by the corresponding q.c. extension. Then, $T_{b}(\mathbb{R})$ and $T_{b}(S^1)$ (the unit circle model of the BMO-Teichm\"uller space) can be converted to each other  under the conjugate action of  the Cayley transform,  which is  what we call the conformal invariance of the  BMO-Teichm\"{u}ller space.

 However, VMO and  vanishing Carleson measures lack conformal invariance (see \cite[Section 6]{WM3}), which destroys the desirable properties of  the VMO-Teichm\"{u}ller space. Jones proved that a homeomorphism $h$ is strongly quasisymmetric  if and only if  the pull-back operator $P_{h}(f)=f\circ h$ is a bounded   isomorphism of the BMO space (see\cite{Jon}). Noting that  VMO($S^1$) is the closure of continuous functions on the unit circle  in the BMO norm, it is easy to see that $P_{h}$ preserves VMO($S^1$). But  VMO($\mathbb{R}$)  is not  preserved  by $P_{h}$ (see \cite{WM3}), which indicates that $\text{SS}(\mathbb{R})$ has no group structure.
 More importantly,  there exists $h\in\text{SS}(\mathbb{R})$ such that  $\gamma\circ h\circ\gamma^{-1}\notin\text{SS}(S^1)$ (see \cite[Theorem 19]{WM3}), thus $T_{v}(\mathbb{R})$ and   $T_{v}(S^1)$ (the unit circle model of the VMO-Teichm\"uller space) are not well compatible.

We believe that $T_{v}(\mathbb{R})$ is only a product of harmonic analysis, which has left us in the shadow of the fundamental properties of the Teichm\"{u}ller space for a long time. We hope to find a new VMO-Teichm\"{u}ller space with conformal invariance and group structure to replace $T_{v}(\mathbb{R})$.
The introduction of SS$_{0}(\mathbb{R})$ will reconcile these problems.  Let's first state that $\text{SS}_0(\mathbb{R})$ forms a group.
Notice that  the pull-back operator $ P_h $ preserves $ \text{CMO}(\mathbb{R}) $ by the usual density argurments if $h\in\text{SQS}(\mathbb{R})$. Then for any $ f,g\in \text{SS}_0(\mathbb{R}) $ we obtain that $ \log(g\circ f^{-1})'=\log g'\circ f^{-1}-\log f'\circ f^{-1}\in\text{CMO}(\mathbb{R})$, which implies that $ g\circ f^{-1}\in \text{SS}_0(\mathbb{R}) $. Now, set $ T_{c}(\mathbb{R})=\text{SS}_0(\mathbb{R})/\sim $, and call it the CMO-Teichm\"{u}ller space on the real line. Then the natural conformally invariant conditions on the Bers model (or the complex dilatation model) for  $ T_{c}(\mathbb{R})$ turn out to be in terms of the conformal invariance of  strongly vanishing Carleson measures $\text{CM}_{s}(\mathbb{H}) $ (see section 3). Consequently, we can get that  $T_{c}(\mathbb{R})$ and $T_v(S^1)$  are compatible by Theorem \ref{1.1} and Theorem \ref{3.7} below. More precisely, we have

\begin{thm} \label{1.2}
	A homeomorphism $h\in\text{Hom}^{+}(\mathbb{R})$ is strongly vanishing symmetric if and only if $\gamma\circ h\circ\gamma^{-1}$ is strongly symmetric on the unit circle.
\end{thm}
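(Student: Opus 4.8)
The plan is to obtain Theorem~\ref{1.2} by splicing together three ingredients: the boundary characterization of $\mathrm{SS}_0(\mathbb R)$ in Theorem~\ref{1.1}, the conformal invariance of strongly vanishing Carleson measures (Theorem~\ref{3.7}), and the unit‑circle counterpart of the characterization of strongly symmetric homeomorphisms. Extending $h$ to $\hat{\mathbb R}$ by $h(\infty)=\infty$, the conjugate $\tilde h\doteq\gamma\circ h\circ\gamma^{-1}$ is an orientation‑preserving homeomorphism of $S^1$ fixing $\gamma(\infty)=1$, and the assignment $f\mapsto\gamma\circ f\circ\gamma^{-1}$ is a bijection between the q.c.\ self‑maps of $\mathbb H$ with boundary values $h$ and the q.c.\ self‑maps of $\mathbb D$ with boundary values $\tilde h$; so both sides of Theorem~\ref{1.2} are existential statements over matched families of extensions.

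First I would set up the measure dictionary. If $f$ is a q.c.\ self‑map of $\mathbb H$ and $H=\gamma\circ f\circ\gamma^{-1}$, then since $\gamma$ and $\gamma^{-1}$ are conformal one has $|\mu_H(w)|=|\mu_f(z)|$ whenever $w=\gamma(z)$. Combining this with the elementary identities $1-|\gamma(z)|^{2}=4y/|z+i|^{2}$, $|\gamma'(z)|=2/|z+i|^{2}$ and $du\,dv=|\gamma'(z)|^{2}\,dx\,dy$, a direct computation gives, after pulling back by $\gamma$,
\[
\frac{1}{|\gamma'(z)|}\,\gamma^{*}\!\Bigl(\frac{|\mu_H|^{2}}{1-|w|^{2}}\,du\,dv\Bigr)=\frac12\cdot\frac{|\mu_f(z)|^{2}}{y}\,dx\,dy .
\]
In other words, the measures $|\mu_f|^{2}/y\,dx\,dy$ on $\mathbb H$ and $|\mu_H|^{2}/(1-|w|^{2})\,du\,dv$ on $\mathbb D$ are, up to the harmless factor $1/2$, related exactly by the Cayley correspondence $\lambda\mapsto(\gamma^{*}\lambda)/|\gamma'|$ that intertwines $\mathrm{CM}(\mathbb D)$ with $\mathrm{CM}(\mathbb H)$ recalled in the introduction.

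With this dictionary the argument is an assembly. For the forward implication, let $h\in\mathrm{SS}_0(\mathbb R)$. By Theorem~\ref{1.1} there is a q.c.\ extension $f$ of $h$ to $\mathbb H$ with $|\mu_f|^{2}/y\,dx\,dy\in\mathrm{CM}_s(\mathbb H)$. By the displayed identity and the conformal invariance of $\mathrm{CM}_s$ (Theorem~\ref{3.7}), the corresponding measure $|\mu_H|^{2}/(1-|w|^{2})\,du\,dv$ on $\mathbb D$ lies in $\mathrm{CM}_0(\mathbb D)$, where $H=\gamma\circ f\circ\gamma^{-1}$; hence, by the $S^1$ analogue of the characterization of strongly symmetric homeomorphisms (the unit‑circle counterpart of \cite{Sh,WM2}, cf.\ \cite{AZ,SW}), the boundary values $\tilde h=\gamma\circ h\circ\gamma^{-1}$ belong to $\mathrm{SS}(S^1)$. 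The converse is the same chain run in reverse: from $\tilde h\in\mathrm{SS}(S^1)$ pick a q.c.\ extension $H$ of $\tilde h$ to $\mathbb D$ with $|\mu_H|^{2}/(1-|w|^{2})\,du\,dv\in\mathrm{CM}_0(\mathbb D)$, put $f=\gamma^{-1}\circ H\circ\gamma$, use Theorem~\ref{3.7} to get $|\mu_f|^{2}/y\,dx\,dy\in\mathrm{CM}_s(\mathbb H)$, and invoke Theorem~\ref{1.1} to conclude $h\in\mathrm{SS}_0(\mathbb R)$.

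The only step I expect to be substantial is Theorem~\ref{3.7}; everything else is bookkeeping. The delicate point hidden there — and the reason ordinary vanishing Carleson measures, and $\mathrm{VMO}(\mathbb R)$, fail to be conformally invariant — is the behaviour at $\infty$: the Cayley transform carries $\infty$ to the interior boundary point $1\in S^1$, so an arc of $S^1$ shrinking to $1$ corresponds not to a small Carleson box in $\mathbb H$ but to a ``box at infinity'' (the region over the complement of a large interval, equivalently a box of large scale). Thus $\mathrm{CM}_s(\mathbb H)$ has to be precisely the subclass of vanishing Carleson measures on $\mathbb H$ whose Carleson quotients also tend to zero over such boxes at infinity; once Theorem~\ref{3.7} identifies this as exactly the class matching $\mathrm{CM}_0(\mathbb D)$ under the Cayley transform, Theorem~\ref{1.2} follows with no further work.
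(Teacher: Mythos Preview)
Your proposal is correct and is exactly the route the paper takes: the paper does not write out a separate proof of Theorem~\ref{1.2} but states just before it that the result follows from Theorem~\ref{1.1} and Theorem~\ref{3.7} together with the known $S^1$ characterization of $\mathrm{SS}(S^1)$, which is precisely the splice you carry out. Your explicit verification of the measure dictionary under the Cayley transform is a welcome addition the paper leaves implicit.
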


The paper is broken into the following sections. In section 2, we  revisit some basic definitions and facts.  In section 3, we will prove the   conformal invariance of $\text{CMO}(\mathbb{R}) $  and $\text{CM}_{s}(\mathbb{H}) $ to prepare for the boundary correspondence problem.  Section 4 is devoted to the proof of Theorem \ref{1.1}.

In the paper, $C$, $C_1$, $C_2$ $\cdots$ will denote universal constants that might change from one line to another. The notation $A\lesssim B$ $(A\gtrsim B)$ means that there is an independent constant $C$ such that $A\le CB$ $ (A\ge CB)$. The notation $A \simeq B$ means both $A\lesssim B$ and $A\gtrsim B$. Also, as usual, $\chi_{E}$   denotes the characteristic function on a set $E$. For $a>0$, we define the dilate $aI$ of an interval $I$ to be the interval with the same midpoint as $I$ and length $a|I|$.  Given $x_{0}\in\mathbb{R}$, we define the translation $I+x_{0}$ $\doteq\{x+x_{0}:x\in I\} $ of an interval $I$.

\section{Basic facts and definitions}

A locally integrable function $f$ on $\mathbb{R}$ lies in the space BMO$(\mathbb{R})$ if
\[\|f\|_{*}=\sup_{I\subset\mathbb{R}}\frac{1}{|I|}\int_{I}|f-f_{I}|dx<\infty,\] where  $f_{I}$ denotes the integral mean of $f$ over $I$. It is well-known that John-Nirenberg inequality  implies that for $1\leq r<\infty$
\[\|f\|^{r}_{*}\simeq \sup_{I\subset\mathbb{R}}M_{r,f}(I),\quad\text{where }M_{r,f}(I)\doteq \frac{1}{|I|}\int_{I}|f-f_I|^rdx.\] We say a BMO function belongs to the space VMO$(\mathbb{R})$ if
\[\lim_{|I|\rightarrow 0}\frac{1}{|I|}\int_{I}|f-f_{I}|dx=0.\] It is well-known that VMO$(\mathbb{R})$ is the  closure of uniformly continuous functions on $\mathbb{R}$ in the $ \text{BMO} $ topology. Similarly, we can define BMO $(S^1)$ and VMO$(S^1)$ on the unit circle $S^1$.

 A holomorphic function $F$ on $\mathbb{H}$  belongs to the space BMOA($\mathbb{H}$) if there exists some function $f\in\text{BMO}(\mathbb{R})$ whose Poission extension $P_{f}$ is precisely $F$.  Recall the Poisson extension of $f$ to $\mathbb{H}$ is defined as
\[P_f(z)\doteq\int_{\mathbb{R}} P(z,t)f(t)dt=\frac{1}{\pi}\int_{\mathbb{R}}\frac{y}{(x-t)^2+y^2}f(t)dt, \  z=x+iy\in\mathbb{H}.\] Similarly, we say that a holomorphic function $G$ belongs to BMOA($\mathbb{D}$) if there exists some function $g\in\text{BMO}(S^1)$ such that the Poission extension $P_{g}$ of $g$ to $\mathbb{D}$ is precisely $G$, where \[P_g(w)\doteq\int_{S^1}P(w,\theta)g(e^{i\theta})d\theta=\frac{1}{2\pi}\int_{0}^{2\pi}\frac{1-|w|^2}{|e^{i\theta}-w|^2}g(e^{i \theta})d\theta,  \  w\in\mathbb{D}.\] Notice that   the Poisson kernel enjoys conformal  invariance (see\cite{Gar}), i.e. for the Cayley transform $ w=\gamma(z)$ and $g\in\text{BMO}(S^1)$  we have $ P_g(w)=P_{g\circ\gamma}(z) $, and this fact will be frequently used   in the paper. In addition, if one imposes the  corresponding vanishing condition, one naturally obtains the definition of the VMOA$(\mathbb{H})$, CMOA$(\mathbb{H})$ and VMOA$(\mathbb{D})$. We may define these spaces on $\mathbb{L}$ and $\mathbb{D^*}$ in the same way.

For each interval $I$ on $\mathbb{R}$, the associated Carleson square $Q_I$ is defined as $ $
$Q_I=\{(x,y)\in\mathbb{H}: x\in I, y<|I| \}.$
A positive measure $\lambda$ on $\mathbb{H}$ is called a Carleson measure and belongs to the class $\operatorname{CM}(\mathbb{H})$  if
\[\|\lambda\|^2_{c}\doteq\sup_{I\subset\mathbb{R} }\frac{\lambda(Q_I)}{|I|}<\infty.\]
A Carleson measure $ \lambda$ is  said to be vanishing  and belongs to the class $\operatorname{CM}_{0}(\mathbb{H})$ if \[\lim_{|I|\rightarrow0}\frac{\lambda(Q_I)}{|I|}=0.\]
 Now, we say a Carleson measure  is strongly vanishing and belongs to the class $\operatorname{CM}_{s}(\mathbb{H})$ if  \[\lim_{|I|\rightarrow0}\frac{\lambda(Q_I)}{|I|}=\lim_{|I|\rightarrow\infty}\frac{\lambda(Q_I)}{|I|}=
 \lim_{x\rightarrow\infty}\frac{\lambda(Q_{I+x})}{|I|}=0. \]
We can define these classes on the lower half plane $\mathbb{L}$ in the same way. Carleson measures are closely connected to BMO. By Fefferman-Stein, a function $f$ on $\mathbb{R}$ lies in BMO$(\mathbb{R})$ (VMO$(\mathbb{R})$) if and only if $|\nabla P_{f}(z)|^{2}y dm(z)$ is a (vanishing) Carleson measure, where $dm(z)$ denotes two-dimensional Lebesgue measure $dxdy$. The case of $S^1$ is similar by the same proof. For details, see Garnett's book \cite{Gar} and Zhu's \cite{Zh}.

 A function $w\geq0$ belongs to the class $A_{p}$ $(1< p<\infty)$ if
  \[\sup_{I\subset\mathbb{R}}\Big(\frac{1}{|I|}\int_{I}w(x)dx\Big)\Big(\frac{1}{|I|}\int_{I}w(x)^{-\frac{1}{p-1}}dx\Big)^{p-1}<\infty.\]
It is well-known that  $e^f\in A_{p}\subseteq A_{\infty}$ if $f$ is real-valued and $\|f\|_{*}$ is small enough, which is a simple estimation by John-Nirenberg inequality. As far as CMO is concerned, everything remains the same.

\begin{fact}  \label{fact}
	If $f\in\operatorname{CMO}(\mathbb{R})$ is real-valued, then $e^f\in A_{p}\subseteq A_{\infty}$ .
\end{fact}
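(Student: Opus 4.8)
The plan is to reduce the statement to the known BMO-smallness criterion "$\|f\|_* $ small $\implies e^f \in A_p$", which is quoted in the paragraph just above the Fact. Since $\mathrm{CMO}(\mathbb{R})$ is by definition the $\mathrm{BMO}$-closure of the compactly supported continuous functions, a natural first step is to split a given real-valued $f \in \mathrm{CMO}(\mathbb{R})$ as $f = g + b$, where $g$ is continuous with compact support and $\|b\|_* < \varepsilon$ for $\varepsilon$ as small as we like. The function $g$ is bounded, hence $e^{g}$ is bounded above and below by positive constants, so $e^{g} \in A_p$ trivially (both averages in the $A_p$-condition are comparable to $1$). The function $e^{b}$ lies in $A_p$ by the John–Nirenberg-based smallness estimate, once $\varepsilon$ is chosen below the relevant threshold for the exponent $p$ we want.

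The second step is to combine the two factors. This is where a little care is needed: $A_p$ is not closed under pointwise products in general, so one cannot simply multiply. The clean way is to use the self-improvement / openness of $A_\infty$: choose $p$ with $e^{b}\in A_p$, and recall that every $A_\infty$ weight with a bounded perturbation stays in $A_\infty$. Concretely, since $c_1 \le e^{g} \le c_2$ for positive constants $c_1,c_2$, for every interval $I$ and every measurable $A\subset I$ one has $|e^f(A)|/|e^f(I)| = \int_A e^{g}e^{b}\big/\int_I e^{g}e^{b}$ comparable (with constants depending only on $c_2/c_1$) to $\int_A e^{b}\big/\int_I e^{b} = e^{b}(A)/e^{b}(I)$, and the $A_\infty$ characterization of Coifman–Fefferman in terms of this ratio (the same one used to define $\mathrm{SQS}(\mathbb{R})$ in the introduction) then transfers the $A_\infty$ property from $e^{b}$ to $e^{f}$. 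So $e^f \in A_\infty$, and hence $e^f \in A_q$ for some finite $q$.

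I expect the only genuine subtlety to be bookkeeping the quantifier on $p$: the conclusion "$e^f \in A_p$" should be read as "$e^f$ lies in $A_p$ for some $p\in(1,\infty)$", not for a prescribed $p$, because the bounded factor $e^{g}$ can spoil a given $A_p$ while keeping membership in $A_\infty$ (equivalently in $A_q$ for larger $q$). With that reading the argument is routine. An alternative that avoids the detour through $A_\infty$ entirely: absorb $g$ into the oscillation estimate directly — since $g$ is uniformly continuous with compact support, $\|g\|_*$ is finite but, more to the point, $f=g+b$ still has $\|f\|_* \le \|g\|_* + \varepsilon$, which is not small, so this naive route fails and confirms that the split-and-transfer argument above is the right one. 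The main obstacle, such as it is, is simply resisting the temptation to multiply $A_p$ weights; everything else follows from the already-cited facts.
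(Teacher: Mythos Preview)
Your decomposition $f=g+b$ with $g\in C_0(\mathbb{R})$ and $\|b\|_*<\varepsilon$ is exactly the one the paper uses, and your conclusion $e^f\in A_\infty$ is correct. However, your detour through the Coifman--Fefferman $A_\infty$ condition is unnecessary, because your stated reason for it is mistaken: multiplying an $A_p$ weight by a factor bounded above and below by positive constants \emph{does} preserve $A_p$ (just insert the bounds $c_1\le e^g\le c_2$ into the two integrals defining the $A_p$ characteristic; the constant changes by at most $c_2/c_1$). So once you know $e^b\in A_p$ from the small-BMO criterion and $e^g$ is bounded above and below, you get $e^f=e^g e^b\in A_p$ directly, for the \emph{same} $p$ --- indeed for any prescribed $p>1$ by choosing $\varepsilon$ accordingly. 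Your ``bookkeeping subtlety'' about the quantifier on $p$ is a non-issue.

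The paper's proof is organized slightly differently after the common decomposition: rather than viewing $e^g$ and $e^h$ as separate weights to be multiplied, it estimates the $A_2$ quantity $\int_I e^{g+h}\,dx\int_I e^{-(g+h)}\,dx$ in one shot, centering each exponent at its mean over $I$, applying Cauchy--Schwarz to split the $g$- and $h$-contributions, and then bounding the $g$-integral by $|I|\,e^{2\|g\|_\infty}$ and the $h$-integral by $|I|$ times a John--Nirenberg factor. This gives $e^f\in A_2$ (and, as the paper notes, the same computation works for general $p$). Both arguments are short; the paper's is marginally cleaner because it never leaves the $A_p$ framework, while yours --- once the erroneous worry is removed --- is equally direct.
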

\begin{proof}
	For convenience, we restrict the fact to $p=2$, and the general case has been compressed in the following proof.
	
	Since $f\in\operatorname{CMO}(\mathbb{R}) $, 	there  exist $g\in C_{0}(\mathbb{R})$ and $h\in\operatorname{BMO}(\mathbb{R})$ with  $\|h\|_{*}<\varepsilon$ such that $f=g+h$.
	
	By John-Nirenberg,
		\begin{align*}
		\frac{1}{|I|}\int_{I}e^{|h(x)-h_I|}dx&=\frac{1}{|I|}\int^\infty_{0}\big|\{z\in I: |h(x)-h_I|>t\}\big|e^{t}dt+1\\
		&\leq C_1 \int^\infty_{0}\exp\Big(\frac{-C_2 t}{\|h\|_{*}}\Big)e^{t}dt+1\\
		&=\frac{C_1\|h\|_{*}}{C_2-\|h\|_{*}}+1.
	\end{align*}
	Now, we consider \begin{align*}
		A(I)&\doteq\int_{I}e^{g+h}dx\int_{I}e^{-(g+h)}dx\\
		&=\int_{I}e^{g-g_I+h-h_I}dx\int_{I}e^{g_I-g+h_I-h}dx\\
		&\leq \Big(\int_{I}e^{2(g-g_I)}dx\int_{I}e^{2(h-h_I)}dx\Big)^{\frac{1}{2}}\Big(\int_{I}e^{2(g_I-g)}dx\int_{I}e^{2(h_I-h)}dx\Big)^{\frac{1}{2}}		\\
		&\leq\int_{I}e^{2|g(x)-g_I|}dx\int_{I}e^{2|h(x)-h_I|}dx\\
		&\leq|I|^2e^{\|2g\|_{\infty}} \Big(\frac{2C_1\varepsilon}{C_2-2\varepsilon}+1\Big).
	\end{align*} Hence, $ e^f\in A_{2}\subseteq A_{\infty}$.
\end{proof}
\begin{remark}
	This fact implies that the definition of $\operatorname{SS}_{0}(\mathbb{R})$ can be improved to the set of all locally absolutely continuous homeomorphisms $h$ with $\log h'\in\text{CMO}(\mathbb{R})$, which is  different from the case of $\operatorname{SS}(\mathbb{R})$.
\end{remark}

\section{Conformal invariance of $\text{CMO}(\mathbb{R}) $  and $\text{CM}_{s}(\mathbb{H}) $}

The conformal invariance of BMO$(\mathbb{R})$, which has been exploited by many authors (see for example Garnett's \cite{Gar}), seems first to appear in Garsia's  unpublished notes in 1971. However, the conformal invariance of CMO$(\mathbb{R})$ and the corresponding strongly vanishing Carleson measures $\operatorname{CM}_{s}(\mathbb{H})$ are rarely mentioned.  Due to  the importance of  CMO space and its fundamental role in the VMO-Teich\"{u}ller space theory,   we fill this gap in this section.

As we mentioned earlier, CMO$(\mathbb{R})$  is defined as the closure of continuous functions with compact support in the $ \text{BMO} $ topology.  In fact,  $\text{CMO}(\mathbb{R}) $ can be  characterized  by three vanishing conditions:
\begin{lem}
	\label{3.1} Let $f\in\operatorname{BMO}(\mathbb{R})$.  Then  the following statements are all equivalent: \begin{align*}
		(1) &f\in\operatorname{CMO}(\mathbb{R}),\\
		(2) &\lim_{|I|\rightarrow0} M_{1,f}(I)= \lim_{|I|\rightarrow\infty} M_{1,f}(I)= \lim_{x\rightarrow\infty} M_{1,f}(I+x)=0,\\
		(3) & \lim_{|I|\rightarrow0} M_{r,f}(I)= \lim_{|I|\rightarrow\infty} M_{r,f}(I)= \lim_{x\rightarrow\infty} M_{r,f}(I+x)=0, \quad(1<r<\infty).
	\end{align*}
\end{lem}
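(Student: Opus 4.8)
The plan is to prove the chain of implications $(3)\Rightarrow(2)\Rightarrow(1)\Rightarrow(3)$, or perhaps more economically to establish $(1)\Leftrightarrow(2)$ and $(2)\Leftrightarrow(3)$ separately, exploiting the John--Nirenberg inequality to pass freely between the $r=1$ and general-$r$ oscillation quantities. The implications $(3)\Rightarrow(2)$ are trivial (take $r$ down to $1$, or just use Jensen/H\"older since $M_{1,f}(I)\le M_{r,f}(I)^{1/r}$ on a normalized interval, which is immediate from H\"older's inequality with the probability measure $dx/|I|$). The reverse direction $(2)\Rightarrow(3)$ is the standard self-improving phenomenon: by the John--Nirenberg inequality, on any interval $I$ one has $M_{r,f}(I)\lesssim_r M_{1,f}(I)^r$ whenever $M_{1,f}(I)$ is below a fixed threshold; combined with the fact that $f\in\mathrm{BMO}$ already forces $\|f\|_*<\infty$, the three limits in $(2)$ each drive $M_{1,f}$ below that threshold along the relevant families of intervals (shrinking, growing, escaping to infinity), so the corresponding $M_{r,f}$ limits follow. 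I would state the John--Nirenberg estimate in the localized form $\frac{1}{|I|}\int_I|f-f_I|^r\,dx\le C_r$ whenever $\|f\|_*\le 1$ say, then rescale $f$ by $M_{1,f}(I)$ on each $I$ to get the quantitative bound; this is exactly the computation already carried out in the proof of Fact~\ref{fact}, so it can be cited or reused almost verbatim.

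The substantive part is $(1)\Leftrightarrow(2)$. For $(1)\Rightarrow(2)$: given $f\in\mathrm{CMO}(\mathbb{R})$, write $f=g+h$ with $g\in C_0(\mathbb{R})$ (continuous, compact support) and $\|h\|_*<\varepsilon$. Then $M_{1,f}(I)\le M_{1,g}(I)+M_{1,h}(I)\le M_{1,g}(I)+2\varepsilon$, so it suffices to show $M_{1,g}(I)\to 0$ along each of the three families. If $|I|\to\infty$ or $I$ escapes to infinity (with $|I|$ bounded), then eventually $I$ either contains the support of $g$ with $|I|$ huge, making $\frac{1}{|I|}\int_I|g-g_I|\le \frac{2\|g\|_\infty|\operatorname{supp}g|}{|I|}\to 0$, or $I$ is disjoint from $\operatorname{supp}g$ so the integral is $0$; and if $|I|\to 0$, uniform continuity of $g$ gives $\operatorname{osc}_I g\to 0$ hence $M_{1,g}(I)\to 0$. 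Taking $\varepsilon\to 0$ gives $(2)$. For $(2)\Rightarrow(1)$: this is the approximation step and is the main obstacle. The goal is to produce, for each $\varepsilon>0$, a function $g\in C_0(\mathbb{R})$ with $\|f-g\|_*<\varepsilon$. The natural candidate is a truncation-plus-mollification of $f$: pick a large radius $R$, replace $f$ outside $[-R,R]$ by (a smoothed version of) a suitable constant, and mollify at a small scale $\delta$. The three hypotheses in $(2)$ are precisely what one needs to control the three regimes where the error $\|f-g\|_*$ could be large: the small-scale limit controls the mollification error (oscillation of $f$ at scale $\delta$ is uniformly small), while the large-scale and escape-to-infinity limits control the truncation error (intervals straddling $\pm R$, and the behavior of $f$ far out). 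One must check the oscillation of $f-g$ over every interval $I$ by splitting into cases according to whether $I$ is small or large relative to $\delta$ and $R$ and where it sits; intervals overlapping the transition region near $|x|=R$ need the most care, and here one leans on the $|I|\to\infty$ and $x\to\infty$ hypotheses to say that on such intervals $f$ is already nearly constant in the BMO sense.

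I expect the bookkeeping in $(2)\Rightarrow(1)$ — organizing the case analysis so that each of the three vanishing hypotheses is invoked exactly where it is needed, and verifying the transition-region estimate — to be the technical heart of the argument; everything else is either trivial or a direct application of John--Nirenberg. An alternative, cleaner route for $(2)\Rightarrow(1)$ that avoids explicit mollifiers is to show directly that $\{g\in C_0(\mathbb{R})\}$ is dense by a functional-analytic duality argument (CMO is by definition a closed subspace, so one characterizes its annihilator in the predual $H^1$ and shows any functional vanishing on $C_0$ vanishes on all $f$ satisfying $(2)$); but the hands-on truncation--mollification proof is more self-contained and I would present that. Throughout, the $S^1$ analogue is easier since there is no behavior-at-infinity to worry about, and conformal invariance of BMO (already available) will later transport these statements between $\mathbb{R}$ and $S^1$ where needed.
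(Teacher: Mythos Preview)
The paper does not actually prove this lemma: its remark after the statement says $(1)\Rightarrow(2)$ and $(1)\Rightarrow(3)$ are clear from the definition, $(3)\Rightarrow(2)$ is H\"older, and for the substantive direction $(2)\Rightarrow(1)$ it simply cites Neri and Uchiyama. Your proposal, by contrast, sketches a self-contained proof. Your plan for $(1)\Rightarrow(2)$ (split $f=g+h$ with $g\in C_0$, $\|h\|_*<\varepsilon$) and for $(2)\Rightarrow(1)$ (truncate to a large window and mollify at a small scale, using the three vanishing hypotheses to control the three regimes) is exactly the Uchiyama argument the paper invokes, so on the main content you are aligned with the literature the paper relies on.

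One correction to your $(2)\Rightarrow(3)$ step: the pointwise claim ``$M_{r,f}(I)\lesssim_r M_{1,f}(I)^r$ once $M_{1,f}(I)$ is small'' is false in general, since John--Nirenberg controls $M_{r,f}(I)$ by the supremum of $M_{1,f}$ over \emph{sub}intervals of $I$, not by $M_{1,f}(I)$ alone. (Take $f=N\chi_{[0,N^{-2}]}$ on $I=[0,1]$: $M_{1,f}(I)\asymp N^{-1}$ but $M_{2,f}(I)\asymp 1$.) The implication is still true, but the clean way is interpolation: by Cauchy--Schwarz, $M_{r,f}(I)\le M_{1,f}(I)^{1/2}M_{2r-1,f}(I)^{1/2}\lesssim M_{1,f}(I)^{1/2}\|f\|_*^{r-1/2}$, which tends to $0$ along each of the three families. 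Alternatively you can simply drop $(2)\Rightarrow(3)$ and close the cycle as the paper does via $(3)\Rightarrow(2)\Rightarrow(1)\Rightarrow(3)$, which makes this step unnecessary.
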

\begin{remark}
	 The implication $(1)\Longrightarrow(2)$  or $(1)\Longrightarrow(3)$ is clear by the definition.  $(2)\Longrightarrow(1)$  was announced in Neri \cite{Ne}, and Uchiyama in\cite{Uc} formally gave a rigorous proof of the result.  Finally, we  easily see that $(3)\Longrightarrow(2)$ by H\"{o}lder's inequality.
\end{remark}

 We  now give another characterization of  $\text{CMO}(\mathbb{R}) $, which is also the rudiment of conformal invariance of the space.
The proof is inspired by Garnett in the case of BMO$(\mathbb{R})$ (see\cite{Gar}), but it's  more complicated than that.

In what follows, $M_{r,P_f}(z)$ is defined by
\[M_{r,P_f}(z)\doteq\int_{\mathbb{R}}|f(t)-P_f(z)|^r P(z,t)dt.\]
\begin{prop}\label{3.2}
	 Let $f\in\operatorname{BMO}(\mathbb{R})$.  Then $f\in\operatorname{CMO}(\mathbb{R})$ if and only if for all $z=x+iy\in\mathbb{H}$,
	 \[\lim_{y\rightarrow0}M_{r,P_f}(z)=\lim_{y\rightarrow\infty}M_{r,P_f}(z)=\lim_{x\rightarrow\infty}M_{r,P_f}(z)=0, \ (r\geq1).\]	
\end{prop}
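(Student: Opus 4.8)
The plan is to prove Proposition \ref{3.2} by transferring each of the three limits of $M_{r,P_f}(z)$ into the corresponding limit of $M_{r,f}(I)$ from Lemma \ref{3.1}(3), and conversely. The basic dictionary is the classical fact that for a fixed aspect ratio, the Poisson average over $z=x+iy$ and the integral average over the interval $I=I_z\doteq(x-y,x+y)$ (or a fixed dilate of it) are comparable, in the following quantitative sense: there are absolute constants so that $P(z,t)\gtrsim |I_z|^{-1}$ for $t\in I_z$, hence $M_{r,f}(I_z)\lesssim M_{r,P_f}(z)$ after replacing the centering constant $f_{I_z}$ by $P_f(z)$ (the passage between the two centerings costs at most a bounded multiple of $M_{r,f}$ by the triangle inequality and Hölder, exactly as in Garnett's treatment of BMO). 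In the other direction one splits $\mathbb{R}$ into the dyadic annuli $2^kI_z\setminus 2^{k-1}I_z$ around $x$; on the $k$-th annulus $P(z,t)\lesssim 2^{-k}|I_z|^{-1}$, and $\frac1{|2^kI_z|}\int_{2^kI_z}|f-P_f(z)|^r\,dt$ is controlled by $M_{r,f}(2^kI_z)$ plus a telescoping sum $\sum_{j\le k}|f_{2^{j}I_z}-f_{2^{j-1}I_z}|^r\lesssim k^r\|f\|_*^r$ of the BMO oscillations; summing the geometric-times-polynomial series gives $M_{r,P_f}(z)\lesssim \sup_{k\ge 0} 2^{-k/2}\big(M_{r,f}(2^kI_z)+ k^r\|f\|_*^r\big)$, say, with an absolute constant.

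With this two-sided comparison in hand the three equivalences are routine bookkeeping on which geometric parameters of $I_z=2^kI_z$ degenerate. First I would do the easy direction: if $f\in\operatorname{CMO}(\mathbb{R})$, then by Lemma \ref{3.1}(3) all three of $M_{r,f}(I)$ tend to $0$ as $|I|\to0$, as $|I|\to\infty$, and as the center runs to $\infty$; feeding this into the geometric-sum bound for $M_{r,P_f}(z)$ one checks that $y\to0$ forces every $2^kI_z$ with $2^{-k/2}$ non-negligible to be short (the tail $\sum_{k\ge K}2^{-k/2}$ being uniformly small), $y\to\infty$ forces them to be long, and $x\to\infty$ with $y$ bounded forces the centers to escape — in each case $M_{r,P_f}(z)\to0$. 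Conversely, from $M_{r,f}(I_z)\lesssim M_{r,P_f}(z)$ we read off $(3)$ of Lemma \ref{3.1} directly, since for any interval $I$ there is a point $z$ with $I=I_z$, and $|I|\to0$, $|I|\to\infty$, $\mathrm{center}(I)\to\infty$ correspond respectively to $y\to0$, $y\to\infty$, $x\to\infty$. Then Lemma \ref{3.1} gives $f\in\operatorname{CMO}(\mathbb{R})$. The reduction of general $r\ge1$ to, say, $r=2$ in the intermediate estimates is handled by Hölder in one direction and, in the other, by the John–Nirenberg-type self-improvement already invoked in the excerpt; I would keep $r$ general throughout since nothing genuinely depends on its value.

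The main obstacle, and the place where this is genuinely harder than Garnett's BMO argument, is the uniformity over the \emph{three} regimes simultaneously: one must produce a single estimate for $M_{r,P_f}(z)$ whose right-hand side is a supremum of $M_{r,f}$ over intervals whose relevant weight $2^{-k/2}$ decays fast enough that only \emph{finitely many, explicitly located} intervals matter, and then argue that each of the three limiting regimes pushes all of those finitely many intervals into the corresponding degenerate regime of Lemma \ref{3.1}. The delicate point is the $x\to\infty$ (translation) regime with $y$ staying in a compact set away from $0$ and $\infty$: here the dyadic annuli $2^kI_z$ around $x$ do march off to infinity, but only for $k$ bounded in terms of the fixed range of $y$, so one needs the geometric decay in $k$ to absorb the polynomial factor $k^r\|f\|_*^r$ coming from the telescoped BMO oscillations before invoking $\lim_{x\to\infty}M_{r,f}(I+x)=0$; making the bookkeeping of these competing $k$-dependences clean, uniformly in $z$, is the real work of the proof.
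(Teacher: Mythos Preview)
Your overall architecture is the same as the paper's: the easy direction via the pointwise lower bound $\chi_{I_z}/|I_z|\lesssim P(z,\cdot)$, and the hard direction via the dyadic annular decomposition $I_k=2^kI_z$, splitting $M_{r,P_f}(z)$ into a central piece, the annular oscillations $M_{r,f}(I_k)$, and the telescoped centering errors $|f_{I_k}-f_{I_0}|^r$.

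There is, however, a real gap in your handling of the telescoping term. You bound $\sum_{j\le k}|f_{2^jI_z}-f_{2^{j-1}I_z}|^r$ by $k^r\|f\|_*^r$ and arrive at
\[
M_{r,P_f}(z)\;\lesssim\;\sup_{k\ge0}2^{-k/2}\bigl(M_{r,f}(2^kI_z)+k^r\|f\|_*^r\bigr).
\]
But $\sup_{k\ge0}2^{-k/2}k^r\|f\|_*^r$ is a fixed positive number depending only on $r$ and $\|f\|_*$; your inequality is therefore nothing more than the BMO bound $M_{r,P_f}(z)\lesssim\|f\|_*^r$ and can never force the limit to vanish. The remedy you propose in the last paragraph, that the geometric decay in $k$ ``absorbs'' the polynomial factor $k^r\|f\|_*^r$, only makes the series \emph{converge}; it converges to a positive constant, not to something tending to zero with $z$. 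Once you replace each jump $|f_{I_j}-f_{I_{j-1}}|$ by $C\|f\|_*$ you have discarded the CMO hypothesis for that term entirely, and no amount of bookkeeping in $k$ will recover it.

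The repair, which is exactly what the paper does, is to keep the sharper estimate $|f_{I_j}-f_{I_{j-1}}|^r\lesssim M_{r,f}(I_j)$ for each $j$ individually, so that the telescoping contribution becomes (up to polynomial factors in $n$) a double sum $\sum_n 2^{-n}\sum_{k\le n} M_{r,f}(I_k)$. Now choose a cutoff $N$: the tail $n>N$ contributes at most a polynomial in $N$ times $2^{-N}\|f\|_*^r$, small for $N$ large; for $n\le N$ every $M_{r,f}(I_k)$ with $k\le N$ is $<\varepsilon$ once $z$ is deep enough in the relevant regime, since all of $I_0,\dots,I_N$ are then simultaneously short, or simultaneously long, or simultaneously far from the origin. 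The same split-at-$N$ handles the annular-oscillation sum. So the ``real work'' you flag is resolved not by playing geometric decay against a fixed BMO constant, but by invoking the CMO smallness on the first $N$ telescoping steps and reserving the geometric decay for the tail only.
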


\begin{proof}
	$\Longleftarrow$  Let $I$ be an interval on $\mathbb{R}$ and $z=x+iy$, where $x$ is the midpoint of  $I$ and $y=\frac{|I|}{2}$. Then some basic inequalities  and $\frac{\chi_I(t)}{|I|}\leq\pi P(z,t)$  show that
\begin{align*}
		M_{r,f}(I)&=\frac{1}{|I|}\int_{I}|f(t)-P_f(z)+P_f(z)-f_{I}|^rdt\\
		&\lesssim  \frac{1}{|I|}\int_{I}|f(t)-P_f(z)|^rdt+\Big(\frac{1}{|I|}\int_{I}|f(t)-P_f(z)|dt\Big)^{r}\\
		&\lesssim \frac{1}{|I|}\int_{I}|f(t)-P_f(z)|^rdt\\
		&=\int_{\mathbb{R}}\frac{\chi_I(t)}{|I|}|f(t)-P_f(z)|^rdt\\
		&\leq M_{r,P_f}(z)= M_{r,P_f}(x+i|I|/2).
	\end{align*} If one takes limits on both sides and then uses Lemma \ref{3.1}, one easily obtains the sufficiency.
	
$\Longrightarrow$	Suppose  $f\in\text{CMO}(\mathbb{R}) $ and $z=x+iy\in\mathbb{H}$. Set $I_k=\{t:|t-x|<2^ky\}$, $k\in\mathbb{N}$.

We by some basic inequalities have
\begin{align*}
M_{r,P_f}(z)&\lesssim \int_{\mathbb{R}}|f(t)-f_{I_0}|^rP(z,t)dt+\int_{\mathbb{R}}|f_{I_0}-P_f(z)|^rP(z,t)dt\\
&=\int_{\mathbb{R}}|f(t)-f_{I_0}|^rP(z,t)dt+ |f_{I_0}-P_{f}(z)|^{r}\\
&\leq\int_{\mathbb{R}}|f(t)-f_{I_0}|^rP(z,t)dt+ \Big(\int_{\mathbb{R}}|f(t)-f_{I_0}|P(z,t)dt\Big)^r\\
&\lesssim\int_{\mathbb{R}}|f(t)-f_{I_0}|^rP(z,t)dt\\
&\doteq B.
\end{align*}

Notice that   $|I_k|=2^{k+1}y $, $P(z,t)\leq Cy^{-1}$  while $t\in I_{0} $, and  $P(z,t)\leq C(2^{2k}y)^{-1}$ while  $t\in I_k/I_{k-1}$, then we have
\begin{align*}
	B&\lesssim \frac{1}{y}\int_{I_0}|f(t)-f_{I_0}|^rdt+\sum_{k=1}^{\infty}\frac{1}{2^{2k}y}\int_{I_k/I_{k-1}}\big(|f(t)-f_{I_k}|^r+|f_{I_k}-f_{I_0}|^r\big)dt\\
	&\leq \frac{1}{y}\int_{I_0}|f(t)-f_{I_0}|^rdt+ \sum_{k=1}^{\infty}\frac{1}{2^{2k}y}\int_{I_k}|f(t)-f_{I_k}|^rdt+\sum_{k=1}^{\infty}\frac{1}{2^k}|f_{I_k}-f_{I_0}|^r\\
	&\doteq B_1+ B_2 +B_3.
\end{align*}
To estimate $ \lim_{y\rightarrow0}M_{r,P_f}(z) $ firstly. For each $\varepsilon>0$ there exists an $N>0$ such that \[\sum_{n=N+1}^{\infty}2^{-n}<\sum_{n=N+1}^{\infty}n(n+1)2^{-(n+1)}<\varepsilon.\] Also,
there exists  $\delta>0$ such that  for $ |I_k|<\delta  $ $( k\leq N)$   we have \[\frac{1}{|I_k|}\int_{I_k}|f(t)-f_{I_k}|^rdx<\varepsilon.\]
Clearly, $ \lim_{y\rightarrow0}B_1=0 $. For $ B_2 $,
 \begin{align*}
	B_2&=\sum_{k=1}^{\infty}\frac{1}{2^{2k}y}\int_{I_k}|f(t)-f_{I_k}|^rdt\\
	&\simeq\sum_{k=1}^{\infty}\frac{1}{2^{k}}\frac{1}{|I_k|}\int_{I_k}|f(t)-f_{I_k}|^rdt\\
	&<\sum_{k=1}^{N}\frac{\varepsilon}{2^k}+\sum_{k=N+1}^{\infty}\frac{\|f\|^r_*}{2^k}\\
	&\lesssim\varepsilon.
\end{align*}
 Also,
\begin{align*}
	B_3 &=\sum_{n=1}^{\infty}\frac{1}{2^n}|f_{I_n}-f_{I_0}|^r\\
	&\lesssim\sum_{n=1}^{\infty}\frac{1}{2^n}\sum_{k=1}^{n}|f_{I_k}-f_{I_{k-1}}|^r\\
	&\lesssim\sum_{n=1}^{\infty}\frac{1}{2^n}\sum_{k=1}^{n}\frac{1}{|I_k|}\int_{I_k}|f(x)-f_{I_k}|^rdx\\
	&\leq \sum_{n=1}^{N}\frac{n(n+1)}{2^{n+1}}\varepsilon+\sum_{n=N+1}^{\infty}\frac{n(n+1)}{2^{n+1}}\|f\|^r_*\\
	&\lesssim\varepsilon.
\end{align*}
Hence, $ \lim_{y\rightarrow0}M_{r,P_f}(z)=0 $.

 Repeat the above process,  it is easy to get \[\lim_{y\rightarrow\infty}M_{r,P_f}(z)\lesssim\lim_{y\rightarrow\infty} \sum_{k=1}^{3}B_k= 0.\]

Finally,  the case of $x\to\infty$ is also similar. Since $f\in\text{CMO}(\mathbb{R}) $, for any $\varepsilon>0$ there exists $N>0$ such that for any interval $I$ satisfying $\text{dist}(I,0)>N$, we have $M_{r,f}(I)<\varepsilon$. There exists also an $N$ such that  $\text{dist}(I_k,0)>M$ for any $k\leq N$, thus $M_{r,f}(I_k)<\varepsilon$ ($k\leq N$). Similar to  handle  $B_1$, $B_2$, $B_3$ when $y\to0$, we can complete the proof.
\end{proof}

It is much simpler to characterize VMO$(S^1)$ by Poisson extension, which has been proved by D. Girela (see\cite[Theorem 4.9]{Gi}).

\begin{lem} \label{3.3}
	Let $ f\in\operatorname{BMO}(S^1) $. Then $ f\in\operatorname{VMO}(S^1)$ if and only if
	\[\lim_{|w|\rightarrow1}\int_{S^1}|f(e^{i\theta})-P_f(w)|^rP(w,\theta)d\theta=0,\quad(r\geq1). \]
\end{lem}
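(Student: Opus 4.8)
The statement to prove is Lemma~\ref{3.3}: for $f\in\operatorname{BMO}(S^1)$, membership in $\operatorname{VMO}(S^1)$ is equivalent to $\lim_{|w|\to1}M_{r,P_f}(w)=0$ (with the obvious notation on the disk). Although the excerpt attributes this to Girela, I would reprove it by adapting the one-sided analogue of Proposition~\ref{3.2}, which is technically cleaner here because $S^1$ is compact: there is only the single degeneration $|w|\to1$, no ``$|I|\to\infty$'' or ``center$\to\infty$'' to track. The plan is to run both implications through the correspondence between boundary arcs $I\subset S^1$ and the point $w=w(I)\in\mathbb D$ lying at hyperbolic distance $\sim1$ from the ``center'' of $I$ with $1-|w|\simeq|I|$, exactly as in the proof of Proposition~\ref{3.2} but with the Carleson-square geometry replaced by the arc geometry; the key quantitative facts are $\chi_I(e^{i\theta})/|I|\lesssim P(w,\theta)$ on $I$ and the dyadic decay $P(w,e^{i\theta})\lesssim (2^{2k}|I|)^{-1}$ on the annular arc $I_k\setminus I_{k-1}$, where $I_k$ is the arc of length $2^k|I|$ concentric with $I$ (truncated once $2^k|I|\gtrsim 2\pi$).

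For the direction ``$\Leftarrow$'': given the Poisson-side vanishing, pick for each arc $I$ the point $w=w(I)$ as above. Then, just as in Proposition~\ref{3.2},
\[
M_{r,f}(I)\;\lesssim\;\frac{1}{|I|}\int_I|f(e^{i\theta})-P_f(w)|^r\,d\theta\;\le\;M_{r,P_f}(w),
\]
using $|f_I-P_f(w)|\le\bigl(\tfrac1{|I|}\int_I|f-P_f(w)|\bigr)\le M_{1,P_f}(w)^{1/r}\cdot(\dots)$ and the elementary inequality $|a+b|^r\lesssim_r|a|^r+|b|^r$; since $|I|\to0$ forces $|w|\to1$, letting $|I|\to0$ gives $\lim_{|I|\to0}M_{r,f}(I)=0$, i.e.\ $f\in\operatorname{VMO}(S^1)$ by the John--Nirenberg characterization of VMO stated in Section~2.

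For the direction ``$\Rightarrow$'': given $f\in\operatorname{VMO}(S^1)$ and $w\in\mathbb D$ with $|w|$ close to $1$, let $I=I_0$ be the concentric arc of length $\simeq 1-|w|$ and $I_k$ its dyadic dilates (stopping at $I_M$ once $|I_M|\simeq 2\pi$, so the sum is \emph{finite}, unlike on $\mathbb R$). Exactly as in Proposition~\ref{3.2} one bounds
\[
M_{r,P_f}(w)\;\lesssim\;\int_{S^1}|f(e^{i\theta})-f_{I_0}|^r P(w,\theta)\,d\theta
\;\lesssim\;\underbrace{\tfrac1{|I_0|}\int_{I_0}|f-f_{I_0}|^r}_{B_1}
+\underbrace{\sum_{k\ge1}2^{-k}\,M_{r,f}(I_k)}_{B_2}
+\underbrace{\sum_{k\ge1}2^{-k}|f_{I_k}-f_{I_0}|^r}_{B_3},
\]
and $B_3$ is controlled by $\sum_n 2^{-n}\sum_{k\le n}M_{r,f}(I_k)$ via a telescoping estimate $|f_{I_k}-f_{I_{k-1}}|\lesssim M_{1,f}(I_k)$. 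Given $\varepsilon>0$, choose $N$ with $\sum_{n>N}n(n+1)2^{-(n+1)}<\varepsilon$; then choose $\delta>0$ so that $M_{r,f}(I_k)<\varepsilon$ whenever $|I_k|<\delta$ and $k\le N$ (possible since $f\in\operatorname{VMO}$); the tail terms $k>N$ are dominated crudely by $\|f\|_*^r$ times the convergent tail, hence $\lesssim\varepsilon$. Taking $|w|\to1$ (which drives $|I_0|\to0$ and hence all the finitely many relevant $|I_k|\to0$) yields $M_{r,P_f}(w)\lesssim\varepsilon$, and we are done.

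\textbf{Main obstacle.} The genuinely delicate point is the uniform control of the long-range term $B_3$ (the drift of the arc averages $f_{I_k}$ away from $f_{I_0}$): one must split at the threshold $N$ \emph{before} invoking the VMO hypothesis, absorb all small arcs ($k\le N$, $|I_k|<\delta$) into the $\varepsilon$-part, and dispatch the geometrically many remaining terms using only the global $\|f\|_*$ bound and the summable weights $n(n+1)2^{-n}$. Everything else is bookkeeping with the two kernel estimates and $|a+b|^r\lesssim|a|^r+|b|^r$; the compactness of $S^1$ makes this strictly easier than Proposition~\ref{3.2}, since the dyadic tower terminates and there is no analogue of the ``at infinity'' conditions to verify.
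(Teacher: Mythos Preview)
Your argument is correct and is the natural adaptation of the proof of Proposition~\ref{3.2} to the compact setting of $S^1$; the compactness indeed kills the two ``at infinity'' limits and makes the dyadic tower finite, so only the $|I|\to0$ vanishing needs to be tracked, and your splitting of $B_2$ and $B_3$ at the threshold $N$ handles that cleanly.

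Note, however, that the paper does not actually prove Lemma~\ref{3.3}: it is quoted from Girela \cite[Theorem~4.9]{Gi} with no argument given. So there is nothing to compare on the paper's side. Your proof is a perfectly good self-contained substitute for the citation, and since it reuses exactly the machinery already laid out in the proof of Proposition~\ref{3.2}, it would fit the paper seamlessly. One minor cosmetic point: the parenthetical ``$|f_I-P_f(w)|\le\bigl(\tfrac{1}{|I|}\int_I|f-P_f(w)|\bigr)\le M_{1,P_f}(w)^{1/r}\cdot(\dots)$'' is garbled; just write $|f_I-P_f(w)|\le\tfrac{1}{|I|}\int_I|f-P_f(w)|$ and then apply Jensen (or H\"older) to pass to the $r$-th power, exactly as the paper does in the $\Leftarrow$ part of Proposition~\ref{3.2}.
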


Now let's state the conformal invariance of $\operatorname{CMO}(\mathbb{R})$.
\begin{thm} \label{3.4}
	Let $f\in\operatorname{BMO}(S^1)$ and  $\gamma(z)=\frac{z-i}{z+i}$ as before. Then
	$f\in\operatorname{VMO}(S^1)$ if and only if $f\circ\gamma\in\operatorname{CMO}(\mathbb{R})$. In other words,  $\operatorname{CMO}(\mathbb{R})$ and  $\operatorname{VMO}(S^1)$ can be  transformed into each other.
\end{thm}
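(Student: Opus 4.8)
The plan is to transport the problem to the unit disk using the Poisson-extension characterizations already established: Proposition \ref{3.2} describes $\operatorname{CMO}(\mathbb{R})$ through the three limits of $M_{r,P_g}$, whereas Lemma \ref{3.3} of Girela describes $\operatorname{VMO}(S^1)$ through the single limit $|w|\to1$ of the analogous quantity on $\mathbb{D}$. Since the Cayley transform preserves $\operatorname{BMO}$, the hypothesis $f\in\operatorname{BMO}(S^1)$ already gives $f\circ\gamma\in\operatorname{BMO}(\mathbb{R})$, so both characterizations apply, and the whole proof reduces to matching the two vanishing conditions. Fix any exponent $r\ge1$ throughout.

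The first step is an exact identity between the two test quantities. By the conformal invariance of the Poisson kernel we have $P_{f\circ\gamma}(z)=P_f(\gamma(z))$, and the change of variables $e^{i\theta}=\gamma(t)$ carries $P(\gamma(z),\theta)\,d\theta$ on $S^1$ over to $P(z,t)\,dt$ on $\mathbb{R}$ (the conformal invariance of harmonic measure); applying this to the integrand $|f(\cdot)-P_f(\gamma(z))|^{r}$ gives
\[
M_{r,P_{f\circ\gamma}}(z)=\int_{\mathbb{R}}\bigl|f(\gamma(t))-P_{f\circ\gamma}(z)\bigr|^{r}P(z,t)\,dt=\int_{S^1}\bigl|f(e^{i\theta})-P_f(\gamma(z))\bigr|^{r}P(\gamma(z),\theta)\,d\theta .
\]
Thus the quantity controlling $\operatorname{CMO}(\mathbb{R})$ at a point $z\in\mathbb{H}$ coincides, verbatim, with the quantity controlling $\operatorname{VMO}(S^1)$ at the point $w=\gamma(z)\in\mathbb{D}$.

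The second step identifies which limiting regimes of $z$ correspond to $|w|\to1$. An elementary computation gives $1-|\gamma(z)|^{2}=4y/(x^{2}+(y+1)^{2})$, so for each $\delta\in(0,1)$ the level set $\{z\in\mathbb{H}:1-|\gamma(z)|^{2}\ge\delta\}=\gamma^{-1}\bigl(\{w:|w|\le\sqrt{1-\delta}\,\}\bigr)$ is a compact subset of $\mathbb{H}$, and these sets increase to $\mathbb{H}$ as $\delta\downarrow0$; hence $|\gamma(z)|\to1$ exactly when $z$ leaves every compact subset of $\mathbb{H}$, i.e.\ exactly when at least one of $y\to0$, $y\to\infty$, $|x|\to\infty$ occurs. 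Conversely, the part of $\mathbb{H}$ left uncontrolled by the three smallness hypotheses of Proposition \ref{3.2} — where $y\ge\delta$, $y\le R$ and $|x|\le R'$ hold simultaneously — is a bounded box bounded away from $\mathbb{R}$, hence compact in $\mathbb{H}$ and therefore contained in some level set $\{1-|\gamma(z)|^{2}\ge\delta'\}$. Combining these two observations, the triple-limit condition of Proposition \ref{3.2} for $g=f\circ\gamma$ is equivalent to $M_{r,P_{f\circ\gamma}}(z)\to0$ as $|\gamma(z)|\to1$, which by the identity above is exactly the condition of Lemma \ref{3.3} for $f$. Reading off the chain
\[
f\circ\gamma\in\operatorname{CMO}(\mathbb{R})\ \Longleftrightarrow\ (\text{triple limit})\ \Longleftrightarrow\ M_{r,P_{f\circ\gamma}}(z)\to0\ \ (|\gamma(z)|\to1)\ \Longleftrightarrow\ f\in\operatorname{VMO}(S^1)
\]
completes the argument.

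I expect the only genuinely delicate point to be the bookkeeping in the second step: one must read the three limits of Proposition \ref{3.2} with the correct uniformity in the complementary variable, so that jointly they say precisely that $M_{r,P_g}(z)\to0$ as $z$ exits the compact subsets of $\mathbb{H}$. The compact-exhaustion picture ($\{1-|\gamma(z)|^{2}\ge\delta\}$ compact and increasing to $\mathbb{H}$) is what makes this clean, and it simultaneously explains conceptually why the seemingly ad hoc triple of conditions on the line collapses into a single radial condition on the circle. The remaining ingredients — the change-of-variables identity and the estimate for $1-|\gamma(z)|^{2}$ — are routine.
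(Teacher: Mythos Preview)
Your proposal is correct and follows essentially the same route as the paper: both proofs use the conformal invariance of the Poisson kernel to identify $M_{r,P_{f\circ\gamma}}(z)$ with the $S^1$-quantity at $w=\gamma(z)$, and then match the three vanishing limits of Proposition~\ref{3.2} with the single radial limit $|w|\to1$ of Lemma~\ref{3.3}. Your level-set discussion via $1-|\gamma(z)|^2=4y/(x^2+(y+1)^2)$ merely makes explicit the compact-exhaustion picture that the paper compresses into one line.
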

\begin{proof}
	Set $w=\gamma(z)$, we have $ P_{|f-P_{f}(w)|}(w)=P_{|f\circ\gamma-P_{f}\circ\gamma(z)|}(z)$, i.e.
	\[\int_{\mathbb{R}}|f\circ\gamma(t)-P_{f\circ\gamma}(z)|P(z,t)dt=\int_{S^1}|f(e^{i\theta})-P_f(w)|P(w,\theta)d\theta.\]
	Since $\{z:|\gamma(z)|\rightarrow1\}=\{z:\text{Im}z\rightarrow0\}\cup\{z:\text{Im}z\rightarrow\infty\}\cup\{z:\text{Re}z\rightarrow\infty\}  $, we have that $ f\circ\gamma\in \text{CMO}(\mathbb{R})$ if and only if $ f\in\text{VMO}(S^1) $ by Proposition \ref{3.2} and Lemma \ref{3.3}.
\end{proof}


 Next we consider the conformal invariance of $\operatorname{CM}_{s}(\mathbb{H})$. We first recall the following result (see  \cite{Gi,Zh}).
\begin{lem}\label{3.5}
	For a Carleson measure $\lambda$ on $\mathbb{D}$,
	$\lambda\in \operatorname{CM}_{0}(\mathbb{D})$ if and only if
	\begin{equation*}
		\lim_{|w_{0}|\rightarrow1}\int_{\mathbb{D}}|\tau_{w_{0}}'(z)|d\lambda(w)=0, \   \operatorname{where} \ \tau_{w_{0}}(w)=\frac{w-w_0}{1-\overline{w_0}w},\ w_{0}\in\mathbb{D}.
	\end{equation*}
\end{lem}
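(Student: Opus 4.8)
The plan is to reduce everything to the explicit formula
\[
|\tau_{w_0}'(w)|=\frac{1-|w_0|^2}{|1-\overline{w_0}w|^2},
\]
together with the elementary two-sided comparison
\[
|1-\overline{w_0}w|\ \simeq\ (1-|w_0|)+(1-|w|)+\bigl|\arg w-\arg w_0\bigr|_{S^1},
\]
valid for all $w,w_0\in\mathbb{D}$, the last term being the angular (geodesic) distance on $S^1$. Once this is in hand, both implications come from testing $|\tau_{w_0}'|$ against the Carleson boxes $Q_J$, $J$ an arc of $S^1$; note that, up to the factor $2\pi$, the expression $\tfrac{1-|w_0|^2}{|1-\overline{w_0}w|^2}$ is just the extension of the Poisson kernel $P(w_0,\cdot)$ of $\mathbb D$ to interior arguments $w$, which is why the disk is the convenient model for this estimate.

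Easy direction ($\Longleftarrow$). Given an arc $J\subset S^1$ with midpoint $\zeta$ and $|J|=\ell$ small, I would take the test point $w_0=(1-\ell)\zeta$. Then $1-|w_0|^2\simeq\ell$, and for $w\in Q_J$ the comparison above gives $|1-\overline{w_0}w|\lesssim\ell$, hence $|\tau_{w_0}'(w)|\gtrsim 1/\ell$. Therefore
\[
\int_{\mathbb{D}}|\tau_{w_0}'(w)|\,d\lambda(w)\ \ge\ \int_{Q_J}|\tau_{w_0}'(w)|\,d\lambda(w)\ \gtrsim\ \frac{\lambda(Q_J)}{|J|}.
\]
As $|J|=\ell\to0$ we have $|w_0|=1-\ell\to1$, so the hypothesis forces $\lambda(Q_J)/|J|\to0$; that is, $\lambda\in\operatorname{CM}_0(\mathbb{D})$.

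Main direction ($\Longrightarrow$). Fix $w_0$ with $1-|w_0|=h$ small, put $\zeta=w_0/|w_0|$, let $J_0$ be the arc of length $h$ centered at $\zeta$, $J_k=2^kJ_0$, and decompose $\mathbb D$ into the dyadic annuli $R_0=Q_{J_0}$ and $R_k=Q_{J_k}\setminus Q_{J_{k-1}}$ for $1\le k\le K$, where $2^Kh\simeq1$ (so that $Q_{J_K}$ exhausts $\mathbb D$). The geometric comparison gives $|1-\overline{w_0}w|\simeq 2^kh$ on $R_k$, hence $|\tau_{w_0}'(w)|\simeq 2^{-2k}/h$ there. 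Summing and using $|J_k|=2^kh$,
\[
\int_{\mathbb{D}}|\tau_{w_0}'(w)|\,d\lambda(w)\ \lesssim\ \sum_{k=0}^{K}\frac{\lambda(Q_{J_k})}{2^{2k}h}\ =\ \sum_{k=0}^{K}\frac{1}{2^{k}}\cdot\frac{\lambda(Q_{J_k})}{|J_k|}.
\]
Now, given $\varepsilon>0$: since $\lambda\in\operatorname{CM}(\mathbb{D})$ there is $M$ with $\lambda(Q_J)/|J|\le M$ for every $J$; since $\lambda\in\operatorname{CM}_0(\mathbb{D})$ there is $\delta>0$ with $\lambda(Q_J)/|J|<\varepsilon$ whenever $|J|<\delta$; and there is $N$ with $\sum_{k>N}2^{-k}M<\varepsilon$. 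For $h<2^{-N}\delta$ one has $|J_k|=2^kh<\delta$ for all $k\le N$, so the sum is at most $\sum_{k\le N}2^{-k}\varepsilon+\sum_{k>N}2^{-k}M\lesssim\varepsilon$, uniformly in $K$. Letting $h=1-|w_0|\to0$ yields the desired limit.

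I expect the only real obstacle to be the careful verification of the two-sided bound for $|1-\overline{w_0}w|$ on each annulus $R_k$ — in particular the lower bound $|1-\overline{w_0}w|\gtrsim 2^kh$ for $w\notin Q_{J_{k-1}}$, which needs the case split into ``$w$ angularly far from $\zeta$'' and ``$1-|w|$ comparatively large''. The remaining ingredients — the geometric-series summation, the $\varepsilon$--$N$ splitting, and the single lower bound used in the easy direction — are routine. (A half-plane variant of the same computation, working directly with the boxes $Q_I$ and the derivatives of the real Möbius maps fixing the lower half plane, would also go through, but passing through $\mathbb{D}$ as stated is cleanest.)
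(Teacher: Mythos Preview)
The paper does not actually prove Lemma~\ref{3.5}; it is quoted as a known result with a reference to Girela and Zhu. Your argument is correct and is the standard one found in those sources: compare $|\tau_{w_0}'|$ to $1/|J|$ on the Carleson box $Q_J$ for the easy direction, and run a dyadic Carleson-box decomposition $R_k=Q_{J_k}\setminus Q_{J_{k-1}}$ together with an $\varepsilon$--$N$ tail split for the main direction. Note, incidentally, that this is exactly the mechanism the paper itself uses when it \emph{does} give proofs of the analogous statements: the estimates $B_1$, $B_2$, $B_3$ in the proof of Proposition~\ref{3.2} are the BMO/Poisson version of your sum $\sum_k 2^{-k}\lambda(Q_{J_k})/|J_k|$, and the paper explicitly says Proposition~\ref{3.6} (the half-plane counterpart of Lemma~\ref{3.5}) follows ``by the same reasoning''. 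So your proof is not merely correct but aligned with the paper's methodology; the one point you flag as needing care---the lower bound $|1-\overline{w_0}w|\gtrsim 2^k h$ on $R_k$---is indeed routine once one splits into the angular case $|\arg w-\arg\zeta|\gtrsim 2^{k-1}h$ and the radial case $1-|w|\ge 2^{k-1}h$.
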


In the recent work \cite{Sh}, we showed that  a Carleson measure ${\lambda}\in \operatorname{CM}_{0}(\mathbb{H})$ if and only if 	
	\begin{equation*}
		\lim_{y_{0}\rightarrow0}\int_{\mathbb{H}}|\gamma_{z_{0}}'(z)|d{\lambda}(z)=0, \   \operatorname{where} \ \gamma_{z_{0}}(z)=\frac{z-z_0}{z-\overline{z_0}},\  z_{0}=x_{0}+iy_{0}\in\mathbb{H}.
	\end{equation*}
By the same reasoning, we can prove the following result with the help of the discussion during the proof of Propostion \ref{3.2}.
\begin{prop}\label{3.6}
		A Carleson measure
	$\lambda\in \operatorname{CM}_{s}(\mathbb{H})$ if and only if for all $z_0=x_0+iy_0\in\mathbb{H}$,
	\[\lim_{y_0\rightarrow0}\int_{\mathbb{H}} |\gamma'_{z_0}(z)| d\lambda(z)=\lim_{y_0\rightarrow\infty}\int_{\mathbb{H}} |\gamma'_{z_0}(z)| d\lambda(z)=\lim_{x_0\rightarrow\infty}\int_{\mathbb{H}} |\gamma'_{z_0}(z)| d\lambda(z)=0.\]
\end{prop}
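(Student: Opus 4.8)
The plan is to mimic the proof of Proposition \ref{3.2}, replacing the mean oscillations $M_{r,f}(I)$ by the normalized Carleson masses $\lambda(Q_I)/|I|$ and the Poisson kernel by the M\"obius factor $|\gamma_{z_0}'|$; note that this $\gamma_{z_0}$ is the half-plane counterpart of the factor appearing in Lemma \ref{3.5}, and that the regime $y_0\to0$ is already the $\operatorname{CM}_0(\mathbb{H})$ criterion quoted above from \cite{Sh}, so only the regimes $y_0\to\infty$ and $x_0\to\infty$ are genuinely new. The only computation I need is $\gamma_{z_0}'(z)=\tfrac{2iy_0}{(z-\overline{z_0})^2}$, hence $|\gamma_{z_0}'(z)|=\tfrac{2y_0}{|z-\overline{z_0}|^2}$, together with the trivial bound $\lambda(Q_I)/|I|\le\|\lambda\|_c^2$.

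For the implication $\Longleftarrow$, I would fix an interval $I$, take $z_0$ with real part the midpoint of $I$ and imaginary part $y_0=|I|/2$, and observe that for $z=x'+iy'\in Q_I$ one has $|z-\overline{z_0}|=\bigl((x'-x_0)^2+(y'+y_0)^2\bigr)^{1/2}\lesssim|I|$, so $|\gamma_{z_0}'(z)|\gtrsim 1/|I|$ on $Q_I$. Integrating over $Q_I$ alone gives $\lambda(Q_I)/|I|\lesssim\int_{\mathbb{H}}|\gamma_{z_0}'(z)|\,d\lambda(z)$, and letting $|I|\to0$, $|I|\to\infty$, and $\operatorname{dist}(I,0)\to\infty$ converts the three hypothesised limits (read, as in Proposition \ref{3.2}, uniformly in the remaining coordinate of $z_0$) into the three defining conditions of $\operatorname{CM}_s(\mathbb{H})$.

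For the implication $\Longrightarrow$, I would fix $z_0=x_0+iy_0$, set $I_k=\{t:|t-x_0|<2^k y_0\}$ for $k\ge0$, and use that the Carleson squares $Q_{I_k}$ are nested and exhaust $\mathbb{H}$, so that $\mathbb{H}=Q_{I_0}\cup\bigcup_{k\ge1}\bigl(Q_{I_k}\setminus Q_{I_{k-1}}\bigr)$. On $Q_{I_0}$ one has $|z-\overline{z_0}|\ge y_0$, whence $|\gamma_{z_0}'|\le2/y_0$; on $Q_{I_k}\setminus Q_{I_{k-1}}$ either $|x'-x_0|\ge2^{k-1}y_0$ or $y'\ge|I_{k-1}|=2^k y_0$, whence $|z-\overline{z_0}|^2\gtrsim2^{2k}y_0^2$ and $|\gamma_{z_0}'|\lesssim2^{-2k}y_0^{-1}$. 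Since $|I_k|=2^{k+1}y_0$, summing the contributions yields
\[\int_{\mathbb{H}}|\gamma_{z_0}'(z)|\,d\lambda(z)\;\lesssim\;\sum_{k=0}^{\infty}2^{-k}\,\frac{\lambda(Q_{I_k})}{|I_k|},\]
the exact analogue of the bound $M_{r,P_f}(z)\lesssim B_1+B_2+B_3$ in Proposition \ref{3.2}. Each regime is then disposed of by fixing $\varepsilon>0$, choosing $N$ with $\|\lambda\|_c^2\sum_{k>N}2^{-k}<\varepsilon$, and handling the head $\sum_{k\le N}$ by: the $\operatorname{CM}_0$ condition when $y_0\to0$ (so $|I_k|\to0$ for $k\le N$); the large-interval condition when $y_0\to\infty$; and the far-from-the-origin condition when $x_0\to\infty$ with $y_0$ bounded, since then $\operatorname{dist}(I_k,0)\ge x_0-2^k y_0\to\infty$ for each $k\le N$ — exactly as the parts $y\to0$, $y\to\infty$, $x\to\infty$ are treated at the end of the proof of Proposition \ref{3.2}.

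The main point requiring care is the order of quantifiers in this last step: the truncation index $N$ must be fixed before the threshold on $y_0$ (resp. on $x_0$), so that the head $\sum_{k\le N}$ is made small by the appropriate $\operatorname{CM}_s$ condition while the tail $\sum_{k>N}$ is already small by Carleson boundedness; I do not expect any new analytic difficulty beyond the elementary derivative formula for $\gamma_{z_0}$, the annular estimates $|\gamma_{z_0}'|\lesssim 2^{-2k}y_0^{-1}$, and the exhaustion $\mathbb{H}=\bigcup_k Q_{I_k}$.
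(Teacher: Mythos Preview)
Your proposal is correct and is precisely what the paper has in mind: the paper's own proof consists of the single sentence ``By the same reasoning, we can prove the following result with the help of the discussion during the proof of Proposition \ref{3.2}'', and you have written out exactly that adaptation. The dyadic decomposition $\mathbb{H}=Q_{I_0}\cup\bigcup_{k\ge1}(Q_{I_k}\setminus Q_{I_{k-1}})$, the annular bound $|\gamma_{z_0}'|\lesssim 2^{-2k}y_0^{-1}$, and the head/tail splitting against $\|\lambda\|_c^2$ are the correct analogues of the $B_1,B_2,B_3$ estimates there, and your handling of the quantifier order matches what is done at the end of that proof.
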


\begin{remark}
	The necessity  is equivalent to  that   for any compact set $K\subset\mathbb{H}$
	\[\inf_{K\subset\mathbb{H}}\sup_{z_0\in\mathbb{H}/K}\int_{\mathbb{H}} |\gamma'_{z_0}(z)| d\lambda(z)=0.\]
\end{remark}

Then, we can obtain  the conformal invariance of  $\operatorname{CM}_{s}(\mathbb{H})$.
\begin{thm}
\label{3.7}	Let $\lambda$ be a Carleson measure on $\mathbb{D}$ and $\gamma(z)=\gamma_{i}(z)=\frac{z-i}{z+i} $. Then \[ \lambda\in\operatorname{CM}_0(\mathbb{D})\Longleftrightarrow \nu\doteq(\gamma^*\lambda)/|\gamma'|\in\operatorname{CM}_s(\mathbb{H}).\]
\end{thm}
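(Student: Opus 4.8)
The plan is to reduce everything to Lemma \ref{3.5} and Proposition \ref{3.6} by one change of variables under the Cayley transform, together with the conformal invariance of ordinary Carleson measures recalled in the introduction. First, since $\lambda\in\operatorname{CM}(\mathbb{D})$ by hypothesis, that invariance already gives $\nu=(\gamma^*\lambda)/|\gamma'|\in\operatorname{CM}(\mathbb{H})$, so both Lemma \ref{3.5} and Proposition \ref{3.6} are applicable and it only remains to match the three vanishing conditions against Lemma \ref{3.5}.

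The heart of the matter is the identity
\[
\int_{\mathbb{H}}|\gamma_{z_0}'(z)|\,d\nu(z)=\int_{\mathbb{D}}|\tau_{w_0}'(w)|\,d\lambda(w),\qquad w_0=\gamma(z_0),
\]
valid for every $z_0\in\mathbb{H}$. To prove it I would change variables $w=\gamma(z)$ on the left, using that by definition $\int_{\mathbb{H}}g\,d\nu=\int_{\mathbb{D}}g(\gamma^{-1}(w))\,|\gamma'(\gamma^{-1}(w))|^{-1}\,d\lambda(w)$; the left-hand side then becomes $\int_{\mathbb{D}}|\gamma_{z_0}'(\gamma^{-1}(w))|\,|\gamma'(\gamma^{-1}(w))|^{-1}\,d\lambda(w)$. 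Now both $\gamma_{z_0}$ and $\tau_{w_0}\circ\gamma$ are conformal maps of $\mathbb{H}$ onto $\mathbb{D}$ sending $z_0$ to $0$, hence differ only by a unimodular rotation; therefore $|\gamma_{z_0}'(z)|=|\tau_{w_0}'(\gamma(z))|\,|\gamma'(z)|$, the factors $|\gamma'(\gamma^{-1}(w))|$ cancel, and the identity follows. (One may instead verify the chain-rule relation $|\gamma_{z_0}'|=|\tau_{w_0}'\circ\gamma|\cdot|\gamma'|$ by a direct M\"obius computation, but the uniqueness argument is cleaner.)

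Granting the identity, the equivalence closes at once. By Lemma \ref{3.5}, $\lambda\in\operatorname{CM}_0(\mathbb{D})$ is equivalent to $\int_{\mathbb{D}}|\tau_{w_0}'(w)|\,d\lambda(w)\to0$ as $|w_0|\to1$, which by the identity is the same as $\int_{\mathbb{H}}|\gamma_{z_0}'(z)|\,d\nu(z)\to0$ as $|\gamma(z_0)|\to1$. Exactly as in the proof of Theorem \ref{3.4}, one has $\{z_0\in\mathbb{H}:|\gamma(z_0)|\to1\}=\{\operatorname{Im}z_0\to0\}\cup\{\operatorname{Im}z_0\to\infty\}\cup\{\operatorname{Re}z_0\to\infty\}$, so this single limit is equivalent to the simultaneous vanishing of the three limits $\lim_{y_0\to0}$, $\lim_{y_0\to\infty}$, $\lim_{x_0\to\infty}$ of $\int_{\mathbb{H}}|\gamma_{z_0}'(z)|\,d\nu(z)$, which is precisely $\nu\in\operatorname{CM}_s(\mathbb{H})$ by Proposition \ref{3.6}.

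The only genuinely delicate point is the bookkeeping: checking that the pullback-plus-weight definition of $\nu$ produces exactly the Jacobian $|\gamma'(\gamma^{-1}(w))|^{-1}$ that cancels the one coming from $|\gamma_{z_0}'|=|\tau_{w_0}'\circ\gamma|\cdot|\gamma'|$, and that under $\gamma$ the three ``ends'' of $\mathbb{H}$ (the two ends of $\mathbb{R}$ together with $\operatorname{Im}z_0\to\infty$, plus $\operatorname{Im}z_0\to0$) sweep out the whole circle $S^1$. Neither requires any new idea beyond what already appeared in Theorem \ref{3.4} and Proposition \ref{3.6}.
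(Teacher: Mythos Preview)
Your proof is correct and follows essentially the same approach as the paper: establish the identity $\int_{\mathbb{H}}|\gamma_{z_0}'(z)|\,d\nu(z)=\int_{\mathbb{D}}|\tau_{\gamma(z_0)}'(w)|\,d\lambda(w)$ by a change of variables plus the observation that the two M\"obius maps $\gamma_{z_0}$ and $\tau_{\gamma(z_0)}\circ\gamma$ differ only by a rotation, and then invoke Lemma \ref{3.5} and Proposition \ref{3.6} together with the description of $\{|\gamma(z_0)|\to1\}$ as the three ``ends'' of $\mathbb{H}$. Your version is slightly more streamlined in that you handle both directions with a single identity and explicitly note up front that $\nu\in\operatorname{CM}(\mathbb{H})$ (so Proposition \ref{3.6} applies), whereas the paper writes out the two directions separately, but the substance is identical.
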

\begin{proof}
	$ \Longrightarrow$ Notice that  $\gamma_{z_0}\circ\gamma^{-1}\circ\gamma(z_0)=0 $, then
	\[ \big|\gamma_{z_0}\circ\gamma^{-1}(w)\big|=\Bigg|\frac{w-\gamma(z_0)}{1-\overline{\gamma(z_0)}w} \Bigg|=\big|\tau_{\gamma_{z_0}}(w)\big|.\] For any $ z_0\in\mathbb{H} $, we have
	\begin{align*}
		\int_{\mathbb{H}} \big|\gamma'_{z_0}(z)\big| d\nu(z)&=\int_{\mathbb{D}} \big|\gamma'_{z_0}\circ\gamma^{-1}(w)\gamma'\circ\gamma^{-1}(w)\big||{(\gamma^{-1})'(w)}|^2 d\lambda(w)\\
		&=\int_{\mathbb{D}} \big|(\gamma_{z_0}\circ\gamma^{-1})'(w)\big|d\lambda(w)\\
		&=\int_{\mathbb{D}} \big|\tau_{\gamma_{z_0}}'(w)\big|d\lambda(w).
	\end{align*}
	Noting that $\{z_0:|\gamma(z_0)|\rightarrow1\}=\{z_0:\text{Im}\ z_{0}\rightarrow0\, \text {or} \, \infty\}\cup\{z_0:\text{Re}\ z_{0}\rightarrow\infty\}  $, then we have
	$ \lambda\circ\gamma(z)|\gamma'(z)|\in\text{CM}_s(\mathbb{H}) $ by Lemma \ref{3.5} and Proposition \ref{3.6}.
	
	$ \Longleftarrow$  Similarly we have \[\big|\tau_{w_0}\circ\gamma(z)\big|=\Bigg|\frac{z-\gamma^{-1}(w_0)}{z-\overline{\gamma^{-1}(w_0)}}\Bigg|=\big|\gamma_{\gamma^{-1}(w_0)}(z)\big|.\]
	For any $ w_0\in\mathbb{D}$, we have
	\begin{align*}
		\int_{\mathbb{D}}|\tau_{w_0}'(w)|d\lambda(w)&=\int_{\mathbb{H}} \big|(\tau_{w_0}\circ\gamma)'(z)\big| d\nu(z)\\
		&=\int_{\mathbb{H}}\big|\gamma_{\gamma^{-1}(w_0)}'(z)\big|d\nu(z).
	\end{align*}
	Clearly, $ |w_0|\rightarrow1 $ is equivalent to that $ \text{Im}\gamma^{-1}(w_0)\rightarrow 0,\infty $ or $ \text{Re}\gamma^{-1}(w_0)\rightarrow\infty $, we obtain $ \lambda(w)\in\operatorname{CM}_0(\mathbb{D})$ by Lemma \ref{3.5} and Proposition \ref{3.6} again.
\end{proof}

\begin{thm}
Suppose that $f$ belongs to $ \operatorname{BMO}(\mathbb{R}) $. Then
\[f\in\operatorname{CMO}(\mathbb{R}) \Longleftrightarrow  |\nabla P_f(z)|^2ydm(z)\in\operatorname{CM}_{s}(\mathbb{H}).\]
\end{thm}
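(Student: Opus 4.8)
The plan is to bypass any direct computation on $\mathbb{H}$ and instead transport the statement to the unit disk, where the Fefferman--Stein characterization of $\operatorname{VMO}(S^1)$ and the conformal invariance results established above do all the work. Given $f\in\operatorname{BMO}(\mathbb{R})$, set $g=f\circ\gamma^{-1}$. By the classical conformal invariance of $\operatorname{BMO}$ we have $g\in\operatorname{BMO}(S^1)$, and Theorem \ref{3.4} gives $f=g\circ\gamma\in\operatorname{CMO}(\mathbb{R})$ if and only if $g\in\operatorname{VMO}(S^1)$. By the $S^1$-analogue of the Fefferman--Stein theorem recalled in Section 2, $g\in\operatorname{VMO}(S^1)$ if and only if $\lambda\doteq|\nabla P_g(w)|^2(1-|w|^2)\,dm(w)\in\operatorname{CM}_0(\mathbb{D})$; and by Theorem \ref{3.7} this in turn holds if and only if $\nu\doteq(\gamma^*\lambda)/|\gamma'|\in\operatorname{CM}_s(\mathbb{H})$. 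Thus everything reduces to the pointwise identity $\nu=2\,|\nabla P_f(z)|^2 y\,dm(z)$, a positive constant factor being irrelevant to membership in $\operatorname{CM}_s(\mathbb{H})$.

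To establish that identity I would combine three elementary facts. First, the conformal invariance of the Poisson kernel gives $P_f(z)=P_{g\circ\gamma}(z)=P_g(\gamma(z))$ on $\mathbb{H}$. Second, completing the harmonic function $P_g$ to a holomorphic function $G$ on $\mathbb{D}$, so that $|\nabla P_g|^2=|G'|^2$ and $P_f=\operatorname{Re}(G\circ\gamma)$, the chain rule yields $|\nabla P_f(z)|^2=|(G\circ\gamma)'(z)|^2=|G'(\gamma(z))|^2\,|\gamma'(z)|^2=|\nabla P_g(\gamma(z))|^2\,|\gamma'(z)|^2$. Third, writing out the density of $\gamma^*\lambda$ on $\mathbb{H}$ via the holomorphic change of variables $w=\gamma(z)$ (whose area Jacobian is $|\gamma'(z)|^2$) gives $\gamma^*\lambda=|\nabla P_g(\gamma(z))|^2(1-|\gamma(z)|^2)\,|\gamma'(z)|^2\,dm(z)$, hence $\nu=|\nabla P_g(\gamma(z))|^2(1-|\gamma(z)|^2)\,|\gamma'(z)|\,dm(z)$. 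Finally the explicit formulas $\gamma'(z)=2i/(z+i)^2$ and $1-|\gamma(z)|^2=4\operatorname{Im}z/|z+i|^2$ give $1-|\gamma(z)|^2=2y\,|\gamma'(z)|$; substituting this together with the gradient identity above turns $\nu$ into $2\,|\nabla P_f(z)|^2 y\,dm(z)$, as desired.

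The only mildly delicate point is the bookkeeping in the last step: one must keep straight whether $\gamma$ is being used as a map $\mathbb{H}\to\mathbb{D}$ or as its inverse, apply the area Jacobian of a holomorphic map correctly, and not drop the factor $1-|\gamma(z)|^2=2y\,|\gamma'(z)|$; none of this is hard, but it is where an error would most naturally creep in. For completeness I would also remark that the theorem admits a direct proof on $\mathbb{H}$ paralleling the classical Fefferman--Stein argument: one decomposes $M_{r,P_f}(z)$ into dyadic annuli around $z$ exactly as in the proof of Proposition \ref{3.2}, and matches the resulting estimates against the three vanishing conditions defining $\operatorname{CM}_s(\mathbb{H})$ through Proposition \ref{3.6}. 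The conformal-invariance route above is, however, shorter, and it makes transparent why the three limits $y\to0$, $y\to\infty$, $x\to\infty$ are exactly the right ones, namely because together they describe $|\gamma(z)|\to1$.
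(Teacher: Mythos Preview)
Your proposal is correct and follows essentially the same route as the paper: both arguments reduce the statement to the Fefferman--Stein characterization of $\operatorname{VMO}(S^1)$ via Theorem \ref{3.4}, then invoke Theorem \ref{3.7}, and finish with the same pointwise identity relating $|\nabla P_f(z)|^2 y$ on $\mathbb{H}$ to $|\nabla P_{f\circ\gamma^{-1}}(w)|^2(1-|w|^2)$ on $\mathbb{D}$ through the conformal invariance of the Poisson kernel and the chain rule. The only cosmetic difference is that the paper carries out the computation after pulling the $\mathbb{H}$-measure forward to $\mathbb{D}$ via $\alpha=\gamma^{-1}$, whereas you pull the $\mathbb{D}$-measure back to $\mathbb{H}$ via $\gamma$; the factor of $2$ you track explicitly is silently absorbed in the paper.
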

\begin{proof}
	Set $\alpha=\gamma^{-1}$.
	By Fefferman-Stein and  Theorem \ref{3.4},
	\[ f\in\text{CMO}(\mathbb{R})\Longleftrightarrow f\circ\alpha\in\text{VMO}(S^1)\Longleftrightarrow|\nabla P_{f\circ\alpha}(w)|^2 (1-|w|^2)dm(w)\in \text{CM}_{0}(\mathbb{D}),\]
		while by Theorem \ref{3.7}, \[|\nabla P_f(z)|^2ydm(z)\in\text{CM}_{s}(\mathbb{H}) \Longleftrightarrow |(\nabla P_f)\circ\alpha(w)|^2\text{Im}\alpha(w) |\alpha'(w)|dm(w)\in\text{CM}_{0}(\mathbb{D}). \]
	Now, the result follows from \begin{align*}
		|(\nabla P_f)\circ\alpha(w)|^2\text{Im}\alpha(w) |\alpha'(w)|&=|(\nabla P_f)\circ\alpha(w)\cdot\alpha'(w)|^2 (1-|w|^2)\\
		&=|\nabla( P_f\circ\alpha)(w)|^2 (1-|w|^2)\\
		&=|\nabla P_{f\circ\alpha}(w)|^2 (1-|w|^2).
	\end{align*}
	
\end{proof}

Recall that a holomorphic function $f$ on $\mathbb{H}$  belongs to the space CMOA($\mathbb{H}$) if there exists some function $u\in\text{CMO}(\mathbb{R})$ whose Poission extension $P_{u}$ is precisely $f$. Then we have   the following result.
\begin{cor} \label{3.9}
	If $ f\in\operatorname{BMOA}(\mathbb{H})$, then the following statements are all equivalent:
	
 (1) $ f\in\operatorname{CMOA}(\mathbb{H}) $;
	
(2) $ f|_{\mathbb{R}}\in\operatorname{CMO}(\mathbb{R}) $;
	
(3) $ |f'(z)|^2 ydm(z)\in\operatorname{CM}_s(\mathbb{H}) $;
	
(4) $ f\circ\gamma^{-1}\in\operatorname{VMOA}(\mathbb{D}) $.
\end{cor}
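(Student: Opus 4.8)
The plan is to prove the four conditions equivalent by hubbing everything at (2), establishing $(1)\Leftrightarrow(2)$, $(2)\Leftrightarrow(3)$ and $(2)\Leftrightarrow(4)$, since (2) is the statement that plugs most directly into the theorems already obtained in this section. Throughout I write $f|_{\mathbb R}$ for the boundary function of $f\in\operatorname{BMOA}(\mathbb H)$, so that $f=P_{f|_{\mathbb R}}$ and $f|_{\mathbb R}$ is the unique $\operatorname{BMO}(\mathbb R)$ function with this property. The equivalence $(1)\Leftrightarrow(2)$ is then just unwinding definitions: since $\operatorname{CMO}(\mathbb R)\subset\operatorname{BMO}(\mathbb R)$, uniqueness of the Poisson representation shows that $f\in\operatorname{CMOA}(\mathbb H)$ — i.e. $f=P_u$ for some $u\in\operatorname{CMO}(\mathbb R)$ — happens precisely when $u=f|_{\mathbb R}$ itself lies in $\operatorname{CMO}(\mathbb R)$.

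For $(2)\Leftrightarrow(3)$ I would split $f|_{\mathbb R}$ into its real and imaginary parts $u,v$, whose Poisson extensions $P_u,P_v$ are the harmonic real and imaginary parts of $f$, and apply the (unnumbered) Theorem just proved to $u$ and to $v$ separately: $f|_{\mathbb R}\in\operatorname{CMO}(\mathbb R)$ iff both $|\nabla P_u|^2 y\,dm$ and $|\nabla P_v|^2 y\,dm$ belong to $\operatorname{CM}_s(\mathbb H)$, hence iff their sum does. The Cauchy--Riemann equations give $|\nabla P_u|^2+|\nabla P_v|^2=2|f'|^2$, so this is exactly condition (3); the passage through $u,v$ is only to remain inside the real-valued hypotheses of the earlier statement.

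For $(2)\Leftrightarrow(4)$, set $\alpha=\gamma^{-1}$. I would first observe that $f\circ\alpha\in\operatorname{BMOA}(\mathbb D)$ with boundary function $(f|_{\mathbb R})\circ\alpha$: conformal invariance of the Poisson kernel gives $f\circ\alpha=P_{f|_{\mathbb R}}\circ\alpha=P_{(f|_{\mathbb R})\circ\alpha}$, and conformal invariance of $\operatorname{BMO}$ gives $(f|_{\mathbb R})\circ\alpha\in\operatorname{BMO}(S^1)$. Hence $f\circ\alpha\in\operatorname{VMOA}(\mathbb D)$ iff its boundary function $(f|_{\mathbb R})\circ\alpha$ lies in $\operatorname{VMO}(S^1)$, which by Theorem \ref{3.4} (with $g=(f|_{\mathbb R})\circ\alpha$, so that $g\circ\gamma=f|_{\mathbb R}$) is equivalent to $f|_{\mathbb R}\in\operatorname{CMO}(\mathbb R)$, i.e. (2). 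Alternatively one can pass $(3)\Leftrightarrow(4)$ directly: by the Fefferman--Stein characterization of $\operatorname{VMOA}(\mathbb D)$, condition (4) says $|(f\circ\alpha)'(w)|^2(1-|w|^2)\,dm(w)\in\operatorname{CM}_0(\mathbb D)$, and pulling this measure back under $\gamma$ and dividing by $|\gamma'|$ yields $2y|f'(z)|^2\,dm(z)$, so Theorem \ref{3.7} converts (4) into (3) — this is the same computation that already appears in the proof of the preceding Theorem, and it is the route that displays the conformal invariance most cleanly.

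I do not expect a genuine obstacle: the corollary is bookkeeping on top of Theorems \ref{3.4} and \ref{3.7}, the Theorem just proved, and the standard facts (uniqueness of the Poisson representation, conformal invariance of the Poisson kernel and of $\operatorname{BMO}$). The only two points deserving a line of care are (i) checking that $f\circ\gamma^{-1}$ really lands in $\operatorname{BMOA}(\mathbb D)$, so that its membership in $\operatorname{VMOA}(\mathbb D)$ is governed by its boundary function; and (ii) the harmless passage from the real-valued formulations of the earlier results to the complex-valued holomorphic $f$ treated here, handled by separating real and imaginary parts.
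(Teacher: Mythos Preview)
Your proposal is correct and matches the paper's intent: the corollary is stated without proof as an immediate consequence of Theorem~\ref{3.4}, Theorem~\ref{3.7}, and the preceding unnumbered theorem, and your chain of equivalences through (2) is precisely the bookkeeping that the paper leaves to the reader. The two points you flag as deserving care---that $f\circ\gamma^{-1}\in\operatorname{BMOA}(\mathbb{D})$ via conformal invariance of the Poisson kernel, and the passage from real-valued to holomorphic via Cauchy--Riemann---are handled correctly.
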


We prepare some basic results on strongly vanishing Carleson measures for the proof of Theorem \ref{1.1} in the next section. We can obtain the following propositions by repeating the discussion to prove Lemmas 7.1, 7.3 and also Proposition 7.4  in our paper \cite{Sh}, where the cases $\operatorname{CM}(\mathbb{H})$ and $\operatorname{CM}_{0}(\mathbb{H})$ are considered. They also hold on the lower half plane $\mathbb{L}$.

\begin{prop}\label{3.10}

Let $\phi$ be analytic  on the upper half plane $\mathbb{H}$, $n\in\Bbb
N$, $\alpha>0$. Set $\lambda(z)=|\phi(z)|^n|y|^\alpha$ for $z=x+iy\in\mathbb{H}$. If  $\lambda\in \operatorname{CM}_{s}(\mathbb{H})$, then
for all compact sets $K$
\[\inf_{K\subset\mathbb{H}}\sup_{z\in\mathbb{H}/K}|\phi(z)|^n|y|^{\alpha+1}=0.\]
\end{prop}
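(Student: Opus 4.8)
The plan is to mirror the treatment of the $\operatorname{CM}_0(\mathbb{H})$ case in \cite{Sh}: one combines the sub-mean-value inequality for the subharmonic function $|\phi|^n$ with the comparison between Euclidean disks and Carleson squares, and then matches the three ``escape'' regimes of $\mathbb{H}$ with the three vanishing conditions defining $\operatorname{CM}_s(\mathbb{H})$.

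First I would fix $z_0=x_0+iy_0\in\mathbb{H}$ and work on the disk $D=D(z_0,y_0/2)\subset\mathbb{H}$. Since $\phi$ is analytic, $|\phi|^n$ is subharmonic, so the sub-mean-value property gives
\[|\phi(z_0)|^n\le\frac{C}{y_0^{2}}\int_{D}|\phi(z)|^n\,dm(z).\]
On $D$ we have $y_0/2<\operatorname{Im}z<3y_0/2$, hence $1\le 2^{\alpha}(\operatorname{Im}z)^{\alpha}/y_0^{\alpha}$; inserting this and multiplying by $y_0^{\alpha+1}$ gives
\[|\phi(z_0)|^n\,y_0^{\alpha+1}\le\frac{C_{\alpha}}{y_0}\int_{D}|\phi(z)|^n(\operatorname{Im}z)^{\alpha}\,dm(z)=\frac{C_{\alpha}}{y_0}\,\lambda(D).\]
Taking $I$ to be the interval centered at $x_0$ with $|I|=4y_0$, one checks at once that $D\subset Q_I$, so $\lambda(D)\le\lambda(Q_I)$ and therefore
\[|\phi(z_0)|^n\,y_0^{\alpha+1}\lesssim\frac{\lambda(Q_I)}{|I|},\]
where $|I|=4y_0$ and $I$ is centered at $x_0$.

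The conclusion then follows from the definition of $\operatorname{CM}_s(\mathbb{H})$. If $\inf_{K\subset\mathbb{H}}\sup_{z\in\mathbb{H}/K}|\phi(z)|^n|y|^{\alpha+1}$ were positive, one could choose an exhaustion $K_1\subset K_2\subset\cdots$ of $\mathbb{H}$ by compact sets and points $z_j=x_j+iy_j\notin K_j$ with $|\phi(z_j)|^n y_j^{\alpha+1}$ bounded below; since $(z_j)$ has no accumulation point in $\mathbb{H}$, after passing to a subsequence one of the regimes $y_j\to0$, $y_j\to\infty$, or $|x_j|\to\infty$ with $y_j$ bounded away from $0$ and $\infty$ must occur. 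In view of the displayed bound and $|I_j|=4y_j$, these force $\lambda(Q_{I_j})/|I_j|\to0$ through the $|I|\to0$, the $|I|\to\infty$, or the $\operatorname{dist}(I,0)\to\infty$ condition respectively, a contradiction. Equivalently, given $\varepsilon>0$ one extracts $0<\eta_1\le\eta_2<\infty$ and $R>0$ from those three conditions and takes $K=\{x+iy:\eta_1/4\le y\le\eta_2/4,\ |x|\le R+\eta_2\}$. The same reasoning works verbatim on $\mathbb{L}$.

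I do not expect a genuine obstacle here: the whole point is that the three ways in which a point can leave every compact subset of $\mathbb{H}$ correspond precisely to the regimes $|I|\to0$, $|I|\to\infty$, and $\operatorname{dist}(I,0)\to\infty$ appearing in the definition of $\operatorname{CM}_s(\mathbb{H})$. The only mild care-points are the subharmonicity of $|\phi|^n$ (which holds for every $n\ge1$, indeed for every $n>0$) and the elementary verification that $D(z_0,y_0/2)$ lies inside a Carleson square whose base interval has length comparable to $y_0$ and midpoint $x_0$.
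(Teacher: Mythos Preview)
Your proof is correct and follows essentially the same route the paper has in mind: the paper does not spell out a proof but refers to Lemmas~7.1, 7.3 and Proposition~7.4 of \cite{Sh}, where the analogous $\operatorname{CM}(\mathbb{H})$ and $\operatorname{CM}_0(\mathbb{H})$ statements are obtained by exactly this sub-mean-value/Carleson-box estimate, and here one simply adds the two extra vanishing regimes of $\operatorname{CM}_s(\mathbb{H})$, as you do. The only detail worth making explicit is that in the third regime the intervals $I_j$ have lengths lying in a fixed compact range, so they can be enlarged to translates of a single fixed interval before invoking the $x\to\infty$ condition.
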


\begin{prop}\label{3.11}
Let $\phi$ be a holomorphic function on the upper half plane $\mathbb{H}$, and for $\alpha>0$ set $\lambda_1(z)=|\phi(z)|^2|y|^{\alpha}$ and $\lambda_2(z)=|\phi'(z)|^2|y|^{\alpha+2}$. Then
  $\lambda_2\in \operatorname{CM}_{s}(\mathbb{H})$ if  $\lambda_1\in \operatorname{CM}_{s}(\mathbb{H})$.
\end{prop}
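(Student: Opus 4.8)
The plan is to reduce the whole statement to one pointwise Cauchy estimate for holomorphic functions together with a Fubini argument that bounds $\lambda_2$ on a Carleson square $Q_I$ by $\lambda_1$ on the fixed dilate $Q_{2I}$; once that comparison is in hand, the three vanishing conditions defining $\operatorname{CM}_s(\mathbb{H})$ for $\lambda_2$ drop out of the corresponding conditions for $\lambda_1$. The first step is to record: for $\phi$ holomorphic on $\mathbb{H}$ and $z=x+iy\in\mathbb{H}$, the disk $D(z,y/2)$ lies in $\mathbb{H}$, and applying Cauchy's integral formula for $\phi'$ on the circles $\{|w-z|=\rho\}$, $0<\rho<y/2$, integrating in $\rho$, and then using the Cauchy--Schwarz inequality yields
\[
|\phi'(z)|^{2}\ \lesssim\ \frac{1}{y^{4}}\int_{D(z,y/2)}|\phi(w)|^{2}\,dm(w).
\]

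Next I would fix an interval $I\subset\mathbb{R}$. If $z=x+iy\in Q_I$ then $x\in I$ and $0<y<|I|$, so $D(z,y/2)\subset Q_{2I}$ and every $w\in D(z,y/2)$ satisfies $y/2<\operatorname{Im}w<3y/2$. Writing $v=\operatorname{Im}w$ and inserting the estimate above,
\[
\lambda_2(Q_I)=\int_{Q_I}|\phi'(z)|^{2}y^{\alpha+2}\,dm(z)\ \lesssim\ \int_{Q_I}y^{\alpha-2}\int_{D(z,y/2)}|\phi(w)|^{2}\,dm(w)\,dm(z).
\]
Interchanging the order of integration, the region $\{z\in Q_I:\ |z-w|<\operatorname{Im}z/2\}$ is nonempty only when $w\in Q_{2I}$, forces $y\simeq v$ on it, and has two-dimensional Lebesgue measure $\lesssim v^{2}$; hence
\[
\lambda_2(Q_I)\ \lesssim\ \int_{Q_{2I}}|\phi(w)|^{2}\,v^{\alpha-2}\cdot v^{2}\,dm(w)=\lambda_1(Q_{2I}).
\]
Since $\lambda_1\in\operatorname{CM}(\mathbb{H})$ this also gives $\lambda_2(Q_I)\lesssim\lambda_1(Q_{2I})\lesssim|I|$, so $\lambda_2$ is itself a Carleson measure.

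It then remains to divide by $|I|$: one gets $\lambda_2(Q_I)/|I|\lesssim\lambda_1(Q_{2I})/|2I|$ with an absolute implied constant. Because $|2I|\to0$ as $|I|\to0$, $|2I|\to\infty$ as $|I|\to\infty$, and $2(I+x)=2I+x$ so that the base interval $2(I+x)$ recedes to infinity as $x\to\infty$, the three limits
\[
\lim_{|I|\to0}\frac{\lambda_2(Q_I)}{|I|}=\lim_{|I|\to\infty}\frac{\lambda_2(Q_I)}{|I|}=\lim_{x\to\infty}\frac{\lambda_2(Q_{I+x})}{|I|}=0
\]
follow at once from $\lambda_1\in\operatorname{CM}_s(\mathbb{H})$, which is the claim. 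The argument on $\mathbb{L}$ is word-for-word the same.

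I do not anticipate a real obstacle. The only places needing care are keeping the exponents straight in the Fubini step --- the factor $v^{2}$ from the area of the $z$-region exactly absorbs the loss $y^{\alpha+2}/y^{4}=y^{\alpha-2}$, leaving the weight $v^{\alpha}$, which is precisely why the power $\alpha+2$ (and not some other power) appears in $\lambda_2$ --- and checking that the dilation constant in $Q_{2I}$ is absolute and commutes with translations, so that all three limit conditions, including the new translation one, transfer verbatim. This is exactly the scheme of Proposition~7.4 of \cite{Sh}, where only the first two conditions (for $\operatorname{CM}(\mathbb{H})$ and $\operatorname{CM}_0(\mathbb{H})$) were needed.
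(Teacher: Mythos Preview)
Your proposal is correct and is precisely the approach the paper has in mind: the paper does not write out a proof of Proposition~\ref{3.11} but simply says it follows by ``repeating the discussion to prove \ldots\ Proposition~7.4 in our paper \cite{Sh}'', which is exactly the Cauchy--estimate--plus--Fubini scheme you carry out, now checked also for the translation limit. The bookkeeping with $y\simeq v$ on the Fubini region works for all $\alpha>0$ regardless of the sign of $\alpha-2$, and your observation $2(I+x)=(2I)+x$ is what makes the third vanishing condition transfer.
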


Let $A(\mathbb{H})$ denote the Banach space of functions $f$ holomorphic in $\mathbb{H}$ with norm
\[\|f\|_{A}\doteq \sup_{z\in\mathbb{H}} y^2|f(z)|<\infty,\] and $A_{0}(\mathbb{H})$ the closed subspace of  $A(\mathbb{H})$ which consists of those functions $f$ such that for compact sets $K$
\[\inf_{K\subset\mathbb{H}}\sup_{z\in\mathbb{H}/K}y^2|f(z)|=0.\]
Corresponding to this Banach space is the familiar Bloch space.
Recall that a  holomorphic function $f$ on $\mathbb{H}$  belongs to the Bloch space $B(\mathbb{H})$ if  \[\|f\|_{B}\doteq \sup_{z\in\mathbb{H}} y|f'(z)|<\infty,\]  and  the little Bloch space $B_{0}(\mathbb{H})$ is the closed subspace of $B(\mathbb{H})$  which consists of those functions $f$
such that  for compact sets $K$  \[\inf_{K\subset\mathbb{H}}\sup_{z\in\mathbb{H}/K}y|f'(z)|=0.\]
It is well known that  $f''\in A(\mathbb{H})$ if $f\in B(\mathbb{H})$, and $f''\in A_0(\mathbb{H})$ if $f\in B_0(\mathbb{H})$ (see \cite[Lemma 3.3]{STW}). We may define these spaces on $\mathbb{L}$, $\mathbb{D}$ and $\mathbb{D^*}$ in the same way.

\begin{cor} \label{3.12}
	Suppose $f\in \operatorname{CMOA}(\mathbb{H})$. Then  $f\in B_{0}(\mathbb{H})$.
\end{cor}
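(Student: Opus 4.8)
The plan is to reduce the statement to Proposition \ref{3.10} via the characterization of $\operatorname{CMOA}(\mathbb{H})$ obtained in Corollary \ref{3.9}. First I would invoke the equivalence $(1)\Leftrightarrow(3)$ of Corollary \ref{3.9}: if $f\in\operatorname{CMOA}(\mathbb{H})$, then the measure with density $\lambda(z)=|f'(z)|^2 y$ with respect to $dm(z)$ lies in $\operatorname{CM}_s(\mathbb{H})$. This is precisely the hypothesis of Proposition \ref{3.10} applied with $\phi=f'$, $n=2$ and $\alpha=1$.

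Applying Proposition \ref{3.10} with those parameters yields, for all compact sets $K$,
\[
\inf_{K\subset\mathbb{H}}\ \sup_{z\in\mathbb{H}/K}\ |f'(z)|^2\,|y|^{\alpha+1}=\inf_{K\subset\mathbb{H}}\ \sup_{z\in\mathbb{H}/K}\ \bigl(y|f'(z)|\bigr)^2=0,
\]
and taking square roots (using that $\sup$ and $\inf$ commute with the monotone map $t\mapsto\sqrt t$ on $[0,\infty)$, and that $y>0$ on $\mathbb{H}$) gives $\inf_{K}\sup_{z\in\mathbb{H}/K} y|f'(z)|=0$, which is exactly the defining condition for $f\in B_0(\mathbb{H})$.

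It remains to confirm the underlying finiteness $f\in B(\mathbb{H})$, which is built into the definition of $B_0(\mathbb{H})$ as a closed subspace of $B(\mathbb{H})$. Here I would use that $\operatorname{CM}_s(\mathbb{H})\subseteq\operatorname{CM}(\mathbb{H})$, so $|f'(z)|^2 y\,dm(z)$ is in particular a Carleson measure; the $\operatorname{CM}(\mathbb{H})$-analogue of Proposition \ref{3.10} (the counterpart of Lemma~7.1 in \cite{Sh}) then gives $\sup_{z\in\mathbb{H}} |f'(z)|^2 y^2<\infty$, i.e. $\|f\|_B<\infty$. Alternatively one may simply cite the classical inclusion $\operatorname{BMOA}\subseteq$ Bloch together with $\operatorname{CMOA}(\mathbb{H})\subseteq\operatorname{BMOA}(\mathbb{H})$.

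I do not expect a genuine obstacle: the corollary is a direct combination of Corollary \ref{3.9} and Proposition \ref{3.10}. The only mild point requiring care is the bookkeeping of exponents in Proposition \ref{3.10} — one must check that $\alpha=1$ is the correct choice so that $\alpha+1=2$ produces $y^2|f'(z)|^2$, matching the square of the Bloch seminorm density — and keeping the boundedness part $f\in B(\mathbb{H})$ logically separate from the vanishing part.
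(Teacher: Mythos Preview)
Your argument is correct and matches the paper's proof exactly: invoke Corollary~\ref{3.9} to obtain $|f'(z)|^2 y\,dm(z)\in\operatorname{CM}_s(\mathbb{H})$, then apply Proposition~\ref{3.10} with $\phi=f'$, $n=2$, $\alpha=1$ to conclude $f\in B_0(\mathbb{H})$. The paper omits your separate verification that $f\in B(\mathbb{H})$, but your handling of this point is fine.
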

\begin{proof}
	By Corollary \ref{3.9}, $ |f'(z)|^2 ydm(z)\in\operatorname{CM}_s(\mathbb{H}) $. It follows from Proposition  \ref{3.10} that  $f\in B_{0}(\mathbb{H})$.
\end{proof}

\section{Boundary correspondence theorem}

We begin by stating a fundamental lemma, which  will be used  in the proof of Theorem \ref{1.1}.

We say that a homeomorphism   $h\in\text{Hom}^{+}(\mathbb{R})$  belongs to the class 	$\widetilde{\operatorname{SS}_{0}}(\mathbb{R})$ if it can be extened as a q.c. map of $\mathbb{H}$ onto itself with a complex dilatation $\mu$ such that $|\mu(z)|^2/ydm(z)\in\operatorname{CM}_{s}(\mathbb{H})$.

\begin{lem} \label{4.1}
	$\widetilde{\operatorname{SS}_{0}}(\mathbb{R})$ forms a group.
\end{lem}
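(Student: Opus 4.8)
My plan is to deduce the group property of $\widetilde{\operatorname{SS}_{0}}(\mathbb{R})$ from the corresponding, and already established, fact for $\text{SS}(S^1)$ by conjugating everything with the Cayley transform $\gamma$. Recall that $\gamma$ extends to a homeomorphism of the one-point compactification $\mathbb{R}\cup\{\infty\}$ onto $S^1$ with $\gamma(\infty)=1$, so that the conjugation
\[C_{\gamma}\colon\ h\ \longmapsto\ \gamma\circ h\circ\gamma^{-1}\]
is a group isomorphism from $\text{Hom}^{+}(\mathbb{R})$ onto the point-stabilizer subgroup $\mathcal{G}_{1}\doteq\{H\in\text{Hom}^{+}(S^1)\colon H(1)=1\}$. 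It therefore suffices to prove: (i) $C_{\gamma}$ restricts to a bijection of $\widetilde{\operatorname{SS}_{0}}(\mathbb{R})$ onto $\{H\in\text{SS}(S^1)\colon H(1)=1\}$; and (ii) $\text{SS}(S^1)$ is a group. Granting (ii), the set $\{H\in\text{SS}(S^1)\colon H(1)=1\}=\text{SS}(S^1)\cap\mathcal{G}_{1}$ is a subgroup (a point-stabilizer inside a group), and by (i) it is isomorphic via $C_{\gamma}^{-1}$ to $\widetilde{\operatorname{SS}_{0}}(\mathbb{R})$, which is then a group; in particular $\mathrm{id}_{\mathbb{R}}\in\widetilde{\operatorname{SS}_{0}}(\mathbb{R})$ (take the dilatation $\mu\equiv 0$), and the class is closed under composition and inversion.

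To prove (i), I would fix $h\in\text{Hom}^{+}(\mathbb{R})$ and let $f$ be an arbitrary quasiconformal self-map of $\mathbb{H}$ extending $h$, with complex dilatation $\mu_{f}$. Then $F\doteq\gamma\circ f\circ\gamma^{-1}$ is a quasiconformal self-map of $\mathbb{D}$ extending $C_{\gamma}(h)$, and $|\mu_{F}(\gamma(z))|=|\mu_{f}(z)|$ since $\gamma$ is conformal. From the elementary identity $1-|\gamma(z)|^{2}=2\,|\gamma'(z)|\,y$ (valid for $z=x+iy\in\mathbb{H}$), a change of variables $w=\gamma(z)$ gives
\[\frac{1}{|\gamma'(z)|}\,\gamma^{*}\!\left(\frac{|\mu_{F}(w)|^{2}}{1-|w|^{2}}\,dm(w)\right)=\frac{1}{2}\cdot\frac{|\mu_{f}(z)|^{2}}{y}\,dm(z).\]
By Theorem \ref{3.7}, $|\mu_{F}(w)|^{2}/(1-|w|^{2})\,dm(w)\in\operatorname{CM}_{0}(\mathbb{D})$ if and only if $|\mu_{f}(z)|^{2}/y\,dm(z)\in\operatorname{CM}_{s}(\mathbb{H})$. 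Since the unit-circle analogue of the equivalence of \cite{Sh,WM2}, established in \cite{AZ,SW}, asserts that $H\in\text{SS}(S^1)$ precisely when $H$ admits a quasiconformal extension to $\mathbb{D}$ whose dilatation induces a vanishing Carleson measure, we conclude that $f$ witnesses $h\in\widetilde{\operatorname{SS}_{0}}(\mathbb{R})$ if and only if $F$ witnesses $C_{\gamma}(h)\in\text{SS}(S^1)$. As $C_{\gamma}(h)(1)=\gamma(h(\infty))=1$ automatically, this shows $C_{\gamma}\big(\widetilde{\operatorname{SS}_{0}}(\mathbb{R})\big)\subseteq\{H\in\text{SS}(S^1)\colon H(1)=1\}$; applying the same equivalence to $h=\gamma^{-1}\circ H\circ\gamma$ (for $H\in\text{SS}(S^1)$ with $H(1)=1$) and to a Carleson-witnessing extension of $H$ gives surjectivity, and injectivity is inherited from $C_{\gamma}$. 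For (ii) I would simply invoke that $\text{SS}(S^1)$ is a group -- either citing \cite{AZ,SW}, or reproving it exactly as the Introduction does for $\text{SS}_{0}(\mathbb{R})$: since $\text{VMO}(S^1)$ is the $\text{BMO}$-closure of $C(S^1)$, the pull-back operator $P_{H}$ preserves it for every $H\in\text{SQS}(S^1)$, whence $\log(G\circ H^{-1})'=\log G'\circ H^{-1}-\log H'\circ H^{-1}\in\text{VMO}(S^1)$ for $G,H\in\text{SS}(S^1)$, and the group structure of $\text{SQS}(S^1)$ finishes it.

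Along this route the main obstacle is only mild bookkeeping: the change-of-variables identity above together with a correct invocation of Theorem \ref{3.7}, plus the care needed around the point $1\in S^1$, which is exactly what forces the passage to the point-stabilizer subgroup. I expect that the one genuinely non-routine point one would meet by instead arguing directly on $\mathbb{H}$ -- via the composition formula $|\mu_{f_{1}\circ f_{2}}(z)|\le|\mu_{f_{2}}(z)|+|\mu_{f_{1}}(f_{2}(z))|$ and the inversion formula $|\mu_{f^{-1}}(w)|=|\mu_{f}(f^{-1}(w))|$ -- is the verification that $|\mu_{f_{1}}(f_{2}(z))|^{2}/y\,dm(z)$ again lies in $\operatorname{CM}_{s}(\mathbb{H})$ whenever the dilatation measures of $f_{1}$ and $f_{2}$ do. That would require a Carleson-box distortion estimate for quasiconformal self-maps with strongly vanishing Carleson dilatation, in the spirit of the proofs of Lemmas~7.1, 7.3 and Proposition~7.4 of \cite{Sh}, but now keeping track of the extra limit $x\to\infty$; the truly delicate regime there is the escape to spatial infinity, which carries no scaling symmetry, so one must exploit that such a map distorts far-away Carleson boxes by a factor tending to $1$. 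Conjugating to $\mathbb{D}$ sidesteps this difficulty by reducing everything to the known group property of $\text{SS}(S^1)$.
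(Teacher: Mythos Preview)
Your proposal is correct and follows essentially the same route as the paper: conjugate by the Cayley transform, invoke Theorem \ref{3.7} to translate the $\operatorname{CM}_{s}(\mathbb{H})$ condition into $\operatorname{CM}_{0}(\mathbb{D})$, and then use the known group property of $\operatorname{SS}(S^1)$. The paper's version is terser---it skips the point-stabilizer bookkeeping at $1\in S^1$ and the explicit change-of-variables identity you wrote out---but the argument is the same.
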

\begin{proof}
	For any $h_{1}$,  $h_{2}\in\widetilde{\operatorname{SS}_{0}}(\mathbb{R})$, we have
	$\gamma\circ h_{1}\circ\gamma^{-1}$, $\gamma\circ h_{2}\circ\gamma^{-1}\in{\operatorname{SS}}(S^1)$ by Theorem \ref{3.7}. Since
	${\operatorname{SS}}(S^1)$ is a group, we have $\gamma\circ h_{1}\circ h^{-1}_{2}\circ\gamma^{-1}\in{\operatorname{SS}}(S^1)$.   By Theorem \ref{3.7} again, we obtain that $h_{1}\circ h^{-1}_{2}\in\widetilde{\operatorname{SS}_{0}}(\mathbb{R})$.
\end{proof}

 Theorem \ref{1.1} can be restated as follows.
\begin{thm}\label{4.2}
	$\operatorname{SS}_{0}(\mathbb{R})=\widetilde{\operatorname{SS}_{0}}(\mathbb{R})$.
\end{thm}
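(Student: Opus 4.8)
The plan is to prove the two inclusions separately, in both cases reducing to the corresponding equivalence on the unit circle via the Cayley transform and the conformal invariance established in Section 3.

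First I would prove $\widetilde{\operatorname{SS}_0}(\mathbb{R})\subseteq\operatorname{SS}_0(\mathbb{R})$, which is the softer direction. Suppose $h$ extends to a q.c. map of $\mathbb{H}$ with dilatation $\mu$ such that $|\mu(z)|^2/y\,dm(z)\in\operatorname{CM}_s(\mathbb{H})$. The complex dilatation of $\gamma\circ h\circ\gamma^{-1}$ is the pullback of $\mu$, and by Theorem \ref{3.7} the pulled-back measure $|\mu_{\gamma\circ h\circ\gamma^{-1}}(w)|^2/(1-|w|^2)\,dm(w)$ lies in $\operatorname{CM}_0(\mathbb{D})$; hence $\gamma\circ h\circ\gamma^{-1}\in\operatorname{SS}(S^1)$ by the known unit-circle theorem of \cite{SW,AZ}. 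In particular $\gamma\circ h\circ\gamma^{-1}$ is locally absolutely continuous with $\log(\gamma\circ h\circ\gamma^{-1})'\in\operatorname{VMO}(S^1)$. Transferring back to $\mathbb{R}$ and using that $\gamma$ is a conformal (hence smooth, with nonvanishing derivative) self-map away from $\infty$, a change-of-variables identity for $\log h'$ together with the conformal invariance of $\operatorname{CMO}$ (Theorem \ref{3.4}) should give $\log h'\in\operatorname{CMO}(\mathbb{R})$; combined with the fact that $h$ is automatically strongly quasisymmetric (its dilatation induces a genuine Carleson measure), this yields $h\in\operatorname{SS}_0(\mathbb{R})$.

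For the reverse inclusion $\operatorname{SS}_0(\mathbb{R})\subseteq\widetilde{\operatorname{SS}_0}(\mathbb{R})$, I would run the same translation in the other direction: if $h\in\operatorname{SS}_0(\mathbb{R})$, then $\log h'\in\operatorname{CMO}(\mathbb{R})$, so by the change-of-variables identity and Theorem \ref{3.4} the circle map $\tilde h=\gamma\circ h\circ\gamma^{-1}$ satisfies $\log\tilde h'\in\operatorname{VMO}(S^1)$, and $h\in\operatorname{SQS}(\mathbb{R})$ forces $\tilde h\in\operatorname{SQS}(S^1)$, so $\tilde h\in\operatorname{SS}(S^1)$. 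By the unit-circle theorem, $\tilde h$ extends to a q.c. map $H$ of $\mathbb{D}$ with $|\mu_H(w)|^2/(1-|w|^2)\,dm(w)\in\operatorname{CM}_0(\mathbb{D})$. Pulling this extension back by $\gamma$ gives a q.c. self-map of $\mathbb{H}$ with boundary values $h$, and Theorem \ref{3.7} turns the vanishing Carleson condition on $\mathbb{D}$ into $|\mu(z)|^2/y\,dm(z)\in\operatorname{CM}_s(\mathbb{H})$; thus $h\in\widetilde{\operatorname{SS}_0}(\mathbb{R})$. (Lemma \ref{4.1}, already established, then guarantees consistency of the group structure, but is not logically needed for the set equality.)

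The main obstacle I anticipate is the passage, in both directions, between the condition ``$\log h'\in\operatorname{CMO}(\mathbb{R})$'' and ``$\log\tilde h'\in\operatorname{VMO}(S^1)$''. Writing $\tilde h=\gamma\circ h\circ\gamma^{-1}$ one gets $\log\tilde h'=\log\gamma'\!\circ h\circ\gamma^{-1}+\log h'\!\circ\gamma^{-1}+\log(\gamma^{-1})'$, and the first and third terms are bounded smooth functions away from the fixed point $\infty$ of $\gamma^{-1}$, so heuristically they cause no trouble; but one must check carefully that the composition operators $f\mapsto f\circ\gamma^{-1}$ and $f\mapsto f\circ h\circ\gamma^{-1}$ carry the relevant vanishing spaces into one another, and that the behavior near $\infty$ (where the third of the three vanishing conditions defining $\operatorname{CMO}$ lives) matches the behavior near the single boundary point $\gamma(\infty)$ on $S^1$. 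This is precisely the kind of delicate compatibility that Proposition \ref{3.2}, Lemma \ref{3.3} and Theorem \ref{3.4} were set up to handle, so the proof should amount to assembling those pieces rather than new estimates; still, the bookkeeping at $\infty$ is where I would be most careful. The rest is a routine composition of equivalences already proved in the excerpt together with the established theory of $\operatorname{SS}(S^1)$.
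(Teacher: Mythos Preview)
Your strategy is genuinely different from the paper's, and the obstacle you flag at the end is not just ``bookkeeping'': it is a real gap that the results of Section~3 do \emph{not} close. From $\tilde h=\gamma\circ h\circ\gamma^{-1}$ one gets, after composing with $\gamma$,
\[
\log\tilde h'\circ\gamma(x)=\log h'(x)+2\log\frac{x+i}{h(x)+i},
\]
so the equivalence $\log h'\in\operatorname{CMO}(\mathbb R)\Longleftrightarrow\log\tilde h'\in\operatorname{VMO}(S^1)$ reduces to showing that $\log\frac{x+i}{h(x)+i}\in\operatorname{CMO}(\mathbb R)$ (equivalently, $\log\frac{1-\tilde h(w)}{1-w}\in\operatorname{VMO}(S^1)$). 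Theorem~3.4 only handles composition with $\gamma$; it says nothing about this correction term, which is built from $\log\gamma'$ and $\log(\gamma^{-1})'$. Neither of those is in the right vanishing class: $\log(\gamma^{-1})'(w)=\log(2i)-2\log(1-w)$ has boundary values $\log(1-e^{i\theta})$, whose mean oscillation on arcs shrinking to $1$ does not tend to zero, so $\log(\gamma^{-1})'\notin\operatorname{VMO}(S^1)$; likewise $\log\gamma'(x)=\log(2i)-2\log(x+i)\notin\operatorname{CMO}(\mathbb R)$ because it fails the large-interval condition. Thus the two singular pieces must cancel against each other in a way that depends on $h$, and proving that cancellation is precisely Theorem~1.2, which the paper obtains \emph{after} Theorem~4.2, not as an input to it. Your plan is therefore circular unless you supply an independent proof of that cancellation, and none of Proposition~3.2, Lemma~3.3 or Theorem~3.4 provides one.

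The paper sidesteps this issue entirely. For $\operatorname{SS}_0\subseteq\widetilde{\operatorname{SS}_0}$ it works directly on $\mathbb H$: it factors $h=k_n\circ\cdots\circ k_1$ with each $\|\log k_N'\|_*$ small, builds a Semmes-type variant of the Beurling--Ahlfors extension for each $k_N$, and checks by Littlewood--Paley and maximal-function estimates that the resulting dilatation induces a measure in $\operatorname{CM}_s(\mathbb H)$, then invokes Lemma~4.1. For $\widetilde{\operatorname{SS}_0}\subseteq\operatorname{SS}_0$ it uses the conformal welding $h=f_\mu^{-1}\circ g_\mu$ and the fact that the \emph{Schwarzian} transforms cleanly under $\gamma$ (since $S_\gamma=0$): the circle theory plus Theorem~3.7 gives $|S_{g_\mu}|^2|y|^3\in\operatorname{CM}_s(\mathbb L)$, and the substantial analytic work is the direct half-plane estimate upgrading this Schwarzian condition to $\log g_\mu'\in\operatorname{CMOA}(\mathbb L)$; then $\log h'=\log g_\mu'-\log f_\mu'\circ h$ and the CMO-invariance of $P_h$ finish. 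The point is that the Schwarzian has no additive Möbius junk, whereas $\log g'$ (and $\log h'$) do --- that is exactly why your shortcut through $\log\tilde h'$ does not go through without new estimates.
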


\begin{proof}
To prove $\operatorname{SS}_{0}(\mathbb{R})\subseteq\widetilde{\operatorname{SS}_{0}}(\mathbb{R}) $, we  will follow the setting developed by Semmes in \cite{S2} to construct a slight variant of the Beurling-Ahlfors extension and make some  adjustments so that it can be well adapted  to $\operatorname{SS}_{0}(\mathbb{R})$.

Suppose  $h$ belongs to $\operatorname{SS}_{0}(\mathbb{R})$. Define $ h_t$ by  \[h_t(x)=h(0)+\int_{0}^{x} h'(u)^tdu,\quad 0\leq t\leq1.\] Then $ t\log h'\in\text{CMO}(\mathbb{R}), $ so  $ (h')^t\in A_2$ and $ h_t\in \text{SS}_0(\mathbb{R})\subseteq  \text{SQS}(\mathbb{R}) $. For $s<t$,	
\[(h_t\circ h^{-1}_s)'(x)=(h'\circ h^{-1}_s(x))^t\cdot (h^{-1}_s)'(x)=(h'\circ h^{-1}_s(x))^{t-s}.\] Thus \[\|\log(h_t\circ h^{-1}_s)'\|_*=(t-s)\|\log h'\circ h^{-1}_s\|_*\leq C(t-s)\|\log h'\|_*,\] where  we have used that $ P_{h_s}$ is a bounded operator with norm independent of $s$. Clearly, we make $ \|\log(h_t\circ h^{-1}_s)'\|_* $ as small as we need by taking $t-s$ small enough. Fix a sufficiently large $n$,  set $ t_N=\frac{N}{n}$ and $ k_N=h_{t_N}\circ h^{-1}_{t_{N-1}}$ ($ 1\leq N\leq n $). Then $ k_N\in\text{SS}_0(\mathbb{R})$  and $\|\log k'_N\|_* $ is small,  and we can write $ h=k_n\circ k_{n-1}\circ\cdots\circ k_2\circ k_1$.

Let $\varphi$ and $\psi$ be smooth even function and odd function with support on $[-1,1]$, respectively, and $ \int \varphi(x)dx=-\int x\psi(x)dx=1 $. Set $ f_y(x)=y^{-1}f(y/x) $, and define $\rho_{N}: \mathbb{C}\rightarrow\mathbb{C}$ by
\begin{align*}
	\rho_{N}(x,y)&\doteq\varphi_y*k_{N}(x)+i\psi_y*k_{N}(x),\\
	\rho_{N}(x,0)&\doteq k_{N}(x).
\end{align*}
Let $\alpha(x)=\frac{1}{2}(1-ix)(\varphi(x)+i\psi(x))$, $ \beta(x) =\frac{1}{2}(1+ix)(\varphi(x)+i\psi(x))$ and $ \log k'_N=a $.   A brutal computation yields that
$ \int_{\mathbb{R}}\alpha(t)dt=0 $,  $\int_{\mathbb{R}}\beta(t)dt=1$   and  \[\overline{\partial}\rho_N(z)=\alpha_y*e^a(x),\quad \partial\rho_N(z)=\beta_y*e^a(x).\]
Note that $\|a\|_{*}$ is small enough, then (see \cite[p.250]{S2}) \[|\beta_y*e^a|\simeq|\exp(\beta_y*a)|, \quad \frac{1}{2y}\int_{x-y}^{x+y}|e^{a(u)-\beta_y*a(x)}-1|du\lesssim \|a\|_*. \]
Thus, \begin{align*}
	|\mu_N(z)|&\doteq\frac{|\alpha_y*e^a(x)|}{|\beta_y*e^a(x)|}\simeq |(\alpha_y*e^{a-\beta_y*a(x)})(x)|\\
	&=|\alpha_y*(e^{a-\beta_y*a(x)}-1)(x)|\\
	&\leq C\|a\|_*<1.
\end{align*}
Next, let's show that  $ |\mu_N(z)|^2/y $ belongs to $\operatorname{CM}_{s}(\mathbb{H})$.  We may assume that $ a_{3I}=0 $, otherwise replace $a $ with $a- a_{3I}$, which does not change $\mu_{N}$.
Since $ |\beta_y*e^a|^{-1}\simeq|\beta_y*e^{-a}| $,
\begin{align*}
	\int_{Q_{I}}\frac{|\mu_N(z)|^2}{y}dm(z)&\simeq \int_{I}\int_{0}^{|I|}|\alpha_y*e^a(x)|^2|\beta_y*e^{-a}(x)|^2y^{-1}dydx\\
	&\leq \Big(\int_{I}\Big(\int_{0}^{|I|}\frac{|\alpha_y*e^{a}(x)|^2}{y}dy\Big)^2dx\Big)^{\frac{1}{2}}\Big(\int_{I} \sup_{0<y<|I|}|\beta_y*e^{-a}(x)|^4dx\Big)^{\frac{1}{2}}\\
	&\doteq V_1 \cdot V_2.
\end{align*} Note that supp $\alpha$ and supp $\beta $ lie in $ \left[-1,1\right] $, and $ \int_{\mathbb{R}}\alpha(t)dt=0 $,    then for $(x,y)\in I\times(0, |I|) $ we have $ \left[x-y,x+y\right]\subseteq 3I $ and
\begin{align*}
	\alpha_y*e^{a}(x)&=\frac{1}{y}\int_{x-y}^{x+y}\alpha\Big(\frac{x-t}{y}\Big)(e^{a(t)}-1)\chi_{3I}(t)dt=\alpha_y*(e^{a}-1)\chi_{3I}(x) \\
	\beta_y*e^{-a}(x)&=\frac{1}{y}\int_{x-y}^{x+y}\alpha\Big(\frac{x-t}{y}\Big)e^{-a(t)}\chi_{3I}(t)dt=\beta_y*e^{-a}\chi_{3I}(x).
\end{align*}
By  $ L^4 $-boundedness of the Littlewood-Paley $ g $ function (see \cite{St}) and the Fact,
\begin{align*}
	V_1&=\Big(\int_{I}\Big(\int_{0}^{|I|}\frac{|\alpha_y*(e^{a}-1)\chi_{3I}(x)|^2}{y}dy\Big)^2dx\Big)^{\frac{1}{2}}\\
	&\lesssim\Big(\int_{3I} |e^{a(x)}-1|^4dx\Big)^{\frac{1}{2}}\leq \Big(\int_{3I} e^{4|a(x)|}|a|^4dx\Big)^{\frac{1}{2}},\\
	&\leq \Big(\int_{3I}|a(x)|^8dx\int_{3I} e^{8|a(x)|}dx\Big)^{\frac{1}{4}}\\
	&\lesssim \Big(\frac{1}{|I|}\int_{3I}|a(x)|^8dx\Big)^{\frac{1}{4}}|I|^{\frac{1}{2}}.
\end{align*}
By  $ L^4 $-boundedness of the maximal function (see  \cite{St}), \[V_2=\Big(\int_{I} \sup_{0<y<|I|}|\beta_y*e^{-a}\chi_{3I}(x)|^4dx\Big)^{\frac{1}{2}}\lesssim\Big(\int_{3I} e^{-4a(x)}dx\Big)^{\frac{1}{2}}\lesssim|I|^{\frac{1}{2}}.\]
Then \[ \frac{1}{|I|}\int_{I}\int_{0}^{|I|}\frac{|\mu_N(z)|^2}{y}dxdy \lesssim  \frac{V_1\cdot V_2}{|I|}\lesssim M^{1/4}_{8,a}(3I) \  (\lesssim\|a\|^{2}_{*}). \]
Hence,  $ |\mu_N|^2/y\in\text{CM}_s(\mathbb{H}) $ by Lemma \ref{3.1}. In addition, $\rho_{N}$ is a q.c. map
by Semmes, then $k_{N}\in\widetilde{\operatorname{SS}_{0}}(\mathbb{R})$ for $1\leq N\leq n$.  By Lemma \ref{4.1}, we get $h=k_n\circ k_{n-1}\circ\cdots\circ k_2\circ k_1\in\widetilde{\operatorname{SS}_{0}}(\mathbb{R})$.  The proof of  $\operatorname{SS}_{0}(\mathbb{R})\subseteq\widetilde{\operatorname{SS}_{0}}(\mathbb{R}) $ has been completed.

Now, we need to prove  $\widetilde{\operatorname{SS}_{0}}(\mathbb{R})\subseteq \operatorname{SS}_{0}(\mathbb{R})$, which is a tricky part of the proof.
Let's first review some of the notations again. As before, $\gamma(z)=(z-i)/(z+i)$ and $\gamma_{\zeta}(z)=(z-\zeta)/(z-\bar{\zeta})$, $\zeta=\xi+i\eta\in\mathbb{H}$.  Denote by $S_{f}$ the Schwarzian derivative of a locally univalent function $f$, which is defined as
\begin{equation}
	S_{f}\doteq N'_{f}-\frac{1}{2}N^{2}_{f}, \quad N_{f}\doteq(\log f')'.
\end{equation}

If $h\in\widetilde{\operatorname{SS}_{0}}(\mathbb{R})$, then  $h$ admits a  q.c. extension $H$   to $\mathbb{H}$  with  complex dilatation $\mu$ such that $|\mu(z)|^2/ydm(z)\in\operatorname{CM}_{s}(\mathbb{H})$. Also, there exists a unique  global q.c. map $\rho$   (in the  normalized sense)  which has complex   dilatation $\mu$ on  $\mathbb{H}$  and is conformal on $\mathbb{L}$.
Setting $f_{\mu}=\rho|_{\mathbb{H}}\circ H^{-1}$ and $g_{\mu}=\rho|_{\mathbb{L}}$, then $h=(f_{\mu}^{-1}\circ g_{\mu})|_{\mathbb{R}}$ clearly.
We call the pair $\{f_{\mu}, g_{\mu}\}$ the normalized solution  of the conformal welding problem to $h$.
Set $\tilde{\rho}=\rho\circ\gamma^{-1}$.
It is easy to see that the complex dilatation $\mu_{\tilde{\rho}}$ of $\tilde{\rho}$ satisfies  $|\mu_{\tilde{\rho}}(w)|^2/(1-|w|^2)\in\text{CM}_{0}(\mathbb{D})$ by Theorem \ref{3.7}, and  $|S_{g_{\mu}\circ\gamma^{-1}}(w)|^2(|w|^2-1)^3\in\operatorname{CM}_{0}(\mathbb{D}^*)$ by  results on $T_{v}(S^1)$ (see \cite[Theorem 4.1]{SW}). Then, we get $|S_{g_{\mu}}(z)|^2|y|^3\in\operatorname{CM}_{s}(\mathbb{L})$ by Theorem \ref{3.7} again, and $S_{g_{\mu}}\in A_{0}(\mathbb{L})$ by Proposition \ref{3.10}.
Therefore, we get $\log g_{\mu}'\in B_{0}(\mathbb{L})$ in terms of the known results of the so-called little Teichm\"{u}ller space (see \cite[Theorem 3.2]{STW}).
Set $g(z)=\overline{g_{\mu}(\bar{z})}$ for $z\in\mathbb{H}$, then 	$|S_{g}(z)|^2y^3dm(z)\in\operatorname{CM}_{s}(\mathbb{H})$, $S_{g}\in A_{0}(\mathbb{H})$
and $\log g'\in B_{0}(\mathbb{H})$.

Let's first  prove that $\log g'\in \operatorname{CMOA}(\mathbb{H})$, or equivalently, $|N_{g}(z)|^{2}ydm(z)\in\operatorname{CM}_{s}(\mathbb{H})$ by Corollary \ref{3.9}. Then we by Proposition \ref{3.6} must show that for each $\varepsilon>0$ there exists a compact set $K_{n_{0}}\subset\mathbb{H}$ such that
\begin{equation}\label{4.0}
	\sup_{\zeta\in\mathbb{H}/K_{n_{0}}} \int_{\mathbb{H}}|N_{g}(z)|^{2}y|\gamma'_{\zeta}(z)|dm(z)<\varepsilon.
\end{equation}

	 Set  $g_{\zeta}=g\circ\gamma^{-1}_{\zeta}$ so that $g=g_{\zeta}\circ\gamma_{\zeta}$. A  direct computation shows that
	\begin{equation}\label{4.1.}
		N_{g}=N_{g_{\zeta}\circ\gamma_{\zeta}}=(N_{g_{\zeta}}\circ\gamma_{\zeta})\gamma'_{\zeta}+N_{\gamma_{\zeta}}=(N_{g_{\zeta}}\circ\gamma_{\zeta}-N_{\gamma^{-1}_{\zeta}}\circ\gamma_{\zeta})\gamma'_{\zeta}.
	\end{equation} On the other hand, for  each  holomorphic function $\phi$ in $\mathbb{D}$,  a short computation gives (see also \cite[Theorem 4.28]{Zh}).
	\begin{equation} \label{4.2.}
	\int_{\mathbb{D}}|\phi(w)|^2(1-|w|^2)dm(w)\simeq|\phi(0)|^2+\int_{\mathbb{D}}|\phi'(w)|^2(1-|w|^2)^{3}dm(w).
	\end{equation}
	By \eqref{4.1.} and \eqref{4.2.}, we write
	\begin{equation}
		\begin{aligned}
			J(\zeta)&\doteq \int_{\mathbb{H}}|N_{g}(z)|^{2}y|\gamma'_{\zeta}(z)|dm(z)\\
			&=\frac{1}{2} \int_{\mathbb{H}}(1-|\gamma_{\zeta}|^2) \big|(N_{g_{\zeta}}\circ\gamma_{\zeta}-N_{\gamma^{-1}_{\zeta}}\circ\gamma_{\zeta})\gamma'_{\zeta}\big|^2dm(z)\\
			&=\frac{1}{2}  \int_{\mathbb{D}} (1-|w|^2)\big| N_{g_{\zeta}}(w) -N_{\gamma^{-1}_{\zeta}}(w)\big|^2dm(w)\\
			&\simeq\big| N_{g_{\zeta}}(0) -N_{\gamma^{-1}_{\zeta}}(0)\big|^2+\int_{\mathbb{D}} (1-|w|^2)^{3}\big| N'_{g_{\zeta}}(w) -N'_{\gamma^{-1}_{\zeta}}(w)\big|^2dm(w).
		\end{aligned}
	\end{equation}
We can now express $J$ as the sum of the following integrals by \eqref{4.1.} again.
\begin{equation}
	\begin{aligned}
	J(\zeta)\simeq\big| \eta N_{g}(\zeta)\big|^2 &+\int_{\mathbb{D}} (1-|w|^2)^{3}\big|S_{g_{\zeta}}(w)+\frac{1}{2} \big(N_{g_{\zeta}}(w) -N_{\gamma^{-1}_{\zeta}}(w)\big)\big|^2dm(w).\\
	=\big| \eta N_{g}(\zeta)\big|^2 &+ \int_{\mathbb{H}} |\gamma'_{\zeta}(z)|y^3 \big|S_{g}(z)+\frac{1}{2}N_{g}(z)\big(N_{g}(z)-2N_{\gamma_{\zeta}}(z)\big)\big|^{2}dm(z)\\
	\lesssim \big| \eta N_{g}(\zeta)\big|^2 &+ \int_{\mathbb{H}} |\gamma'_{\zeta}(z)|y^3 \big|S_{g}(z) \big|^{2}dm(z)\\
	&+\int_{\mathbb{H}}|\gamma'_{\zeta}(z)|y^3 \big(|N_{g}(z)|^{2}+|N_{\gamma_{\zeta}}(z)|^{2}\big)|N_{g}(z)|^{2}dm(z)\\
	\doteq J_{1}(\zeta)+J&_{2}(\zeta)+J_{3}(\zeta).
	\end{aligned}
\end{equation}
Since $|S_{g}(z)|^2y^3dm(z)\in\operatorname{CM}_{s}(\mathbb{H})$
and $\log g'\in B_{0}(\mathbb{H})$, there exists an  $n_{1}>\max\{\frac{1}{\varepsilon}, 10^{3}\}$ and the corresponding compact set $K_{n_{1}}=[-n_{1},n_{1}]\times[{n_{1}}^{-1},n_{1}]$  such that
 \begin{equation}
\sup_{\zeta\in\mathbb{H}/K_{n_{1}}}J_{1}(\zeta)<\varepsilon,\,\sup_{\zeta\in\mathbb{H}/K_{n_{1}}}J_{2}(\zeta)<\varepsilon.
\end{equation} Also, we for $J_{3}(\zeta)$ have
\begin{equation}
	\begin{aligned}
	J_{31}(\zeta)\doteq&\int_{\mathbb{H}/K_{n_{1}}}|\gamma'_{\zeta}(z)|y^3 \big(|N_{g}(z)|^{2}+|N_{\gamma_{\zeta}}(z)|^{2}\big)|N_{g}(z)|^{2}dm(z)\\
		&\leq \varepsilon^2\int_{\mathbb{H}/K_{n_{1}}}y|\gamma'_{\zeta}(z)| \big(|N_{g}(z)|^{2}+|N_{\gamma_{\zeta}}(z)|^{2}\big)dm(z)\\
		&<\varepsilon^2\int_{\mathbb{H}}y|\gamma'_{\zeta}(z)| \big(|N_{g}(z)|^{2}+|N_{\gamma_{\zeta}}(z)|^{2}\big)dm(z)\\
		&\lesssim\varepsilon^2\big(\|\log g'\|_{*}+\|\log \gamma'_{\zeta}\|_{*}\big).
	\end{aligned}
\end{equation}
Note that $\log|x|$ is a canonical BMO function, we  by translation invariance of BMO have $\log \gamma'_{\zeta}(x)= \log2\eta-2\log(x-\bar{\zeta})\in\text{BMO}$ ($\zeta$ has been killed by the BMO norm). Consquently,  $J_{31}(\zeta)\lesssim\varepsilon^2$.

For another part of $J_{3}(\zeta)$, write
\begin{equation}
\begin{aligned}
	J_{32}(\zeta)&\doteq\int_{K_{n_{1}}}|\gamma'_{\zeta}(z)|y^3 \big(|N_{g}(z)|^{2}+|N_{\gamma_{\zeta}}(z)|^{2}\big)|N_{g}(z)|^{2}dm(z)\\
	&=\int_{K_{n_{1}}}{|\gamma'_{\zeta}(z)|}y^{-1}\big(|yN_{g}(z)|^{4}+|yN_{\gamma_{\zeta}}(z)|^{2}|yN_{g}(z)|^{2}\big)dm(z)\\
	&\lesssim\int_{K_{n_{1}}}\frac{\eta dm(z)}{y\big((x-\xi)^2+(y+\eta)^2\big)}\\
	&=\int_{n^{-1}_{1}}^{n_{1}}\frac{\eta}{y}dy\int_{-n_{1}}^{n_{1}}\frac{dx}{(x-\xi)^2+(y+\eta)^2}\\
	&=\int_{n^{-1}_{1}}^{n_{1}}\frac{\eta}{y(y+\eta)}\Big(\arctan\frac{n_{1}+\xi}{y+\eta}-
	\arctan\frac{\xi-n_{1}}{y+\eta}\Big)dy.
\end{aligned}
\end{equation}

Select  a priori compact set $K_{n_{0}}=[-(n^{4}_{1}+n_{1}),n^{4}_{1}+n_{1}]\times [n^{-2}_{1},n^{2}_{1}\log n_{1}]$.
It remains to check that for any $\zeta\in\mathbb{H}/K_{n_{0}}$,
\begin{equation}
J_{32}(\zeta)\doteq  \int_{n^{-1}_{1}}^{n_{1}}\frac{\eta}{y(y+\eta)}\Big(\arctan\frac{n_{1}+\xi}{y+\eta}-
	\arctan\frac{\xi-n_{1}}{y+\eta}\Big)dy<\varepsilon.
\end{equation}
To this end, we divide $\mathbb{H}/K_{n_{0}}$ into three categories so that $\mathbb{H}/K_{n_{0}}=Q_{1}\cup Q_{2}\cup Q_{3} $.

$\bullet$ $Q_{1}=\{(\xi,\eta):-\infty<\xi<\infty, \  0<\eta<n^{-2}_{1}\}$.

$\bullet$ $Q_{2}=\{(\xi,\eta):-\infty<\xi<\infty, \ \eta>n^{2}_{1}\log n_{1}\}$.

$\bullet$ $Q_{3}=\{(\xi,\eta):|\xi|>n^{4}_{1}+n_{1}, \ n^{-2}_{1}<\eta< n^{2}_{1}\log n_{1}\}$.

For  $\zeta\in Q_{1}$,
 \begin{equation}
\begin{aligned}
J_{32}(\zeta)&\leq\int_{n^{-1}_{1}}^{\infty}\frac{\pi\eta}{y(y+\eta)}dy\\
 &=\pi\log(1+\eta n_{1})\\
 &\leq \pi n^{-1}_{1}<\pi\varepsilon.
\end{aligned}	
\end{equation}

For  $\zeta\in Q_{2}$,
	\begin{equation}
		\begin{aligned}
			J_{32}(\zeta)&\leq\int_{n^{-1}_{1}}^{n_{1}}\frac{\eta}{y(y+\eta)}\cdot \frac{2n_{1}}{y+\eta}dy\\
			&\leq\int_{n^{-1}_{1}}^{n_{1}}\frac{1}{y}\cdot \frac{2n_{1}}{n^{2}_{1}\log n_{1}}dy\\
			&=\frac{4\log n_{1}}{n_{1}\log n_{1}}<4\varepsilon.
		\end{aligned}	
	\end{equation}

For  $\zeta\in Q_{3}$,	we assume that $\xi>n^{4}_{1}+n_{1}$ without losing generality. Note that
$n^{-2}_{1}<\eta< n^{2}_{1}\log n_{1}$, we have
		\begin{equation}
		\begin{aligned}
			J_{32}(\zeta)&\leq\int_{n^{-1}_{1}}^{n_{1}}\frac{\eta}{y(y+\eta)}\cdot \frac{2n_{1}}{y+\eta}\cdotp \frac{1}{1+\big(\frac{\xi-n_{1}}{y+\eta}\big)^2}dy\\
			&<2\int_{n^{-1}_{1}}^{n_{1}}\frac{n_{1}}{y^{2}}\cdot \frac{(n_{1}+n^{2}_{1}\log n_{1})^{2}}{n^{8}_{1}}dy\\
			&\lesssim \frac{1}{n^{2}_{1}}\int_{n^{-1}_{1}}^{n_{1}}\frac{1}{y^{2}}dy\lesssim\varepsilon.
		\end{aligned}	
	\end{equation}
	This concludes the proof of \eqref{4.0}, that is,  $\log g'\in \operatorname{CMOA}(\mathbb{H})$, or equivalently, $\log g_{\mu}'\in \operatorname{CMOA}(\mathbb{L})$.
	
	Now by  Lemma \ref{4.1},  $h^{-1}\in\widetilde{\operatorname{SS}_{0}}(\mathbb{R})$. Clearly, $h^{-1}=(g_{\mu}^{-1}\circ f_{\mu})|_{\mathbb{R}}=(g^{-1}\circ f)|_{\mathbb{R}}$ with $f(z)=\overline{f_{\mu}(\bar{z})}$ for $z\in\mathbb{L}$. Thus the pair $\{g, f\}$ is the normalized solution  of the conformal welding problem to $h^{-1}$. Consequently, by what we have proved we obtain $\log f'\in \operatorname{CMOA}(\mathbb{L})$, or equivalently, $\log f_{\mu}'\in \operatorname{CMOA}(\mathbb{H})$.
	
Now on the real line,   $g_{\mu}=f_{\mu}\circ h$, which implies that  $g'_{\mu}=(f'_{\mu}\circ h)h'$ and so	
\begin{equation} 
	\log h'=\log g_{\mu}'-\log f'_{\mu}\circ h.
\end{equation}
By Corollary \ref{3.9},
both $\log f_{\mu}'|_{\mathbb{R}}$ and $\log g_{\mu}'|_{\mathbb{R}}$ lie in 	$\text{CMO}(\mathbb{R})$. Since  the pull-back operator $P_{h}$ preserves $\text{CMO}(\mathbb{R})$, it follows  that $\log h'\in\text{CMO}(\mathbb{R})$.
This completes the proof of  $\widetilde{\operatorname{SS}_{0}}(\mathbb{R})\subseteq \operatorname{SS}_{0}(\mathbb{R})$  and of Theorem \ref{4.2} as well.

\end{proof}

In the proof, we dispose of  logarithmic derivative of the conformal mapping without conformal invariance. Actually we have  the following result parallel to the VMO-Teichm\"{u}ller theory on the unit circle (see\cite[Theorem 4.1]{SW}).

\begin{cor}\label{4.3}
	 Let $g$ be a conformal mapping on the lower half plane $\mathbb{L}$   and $h=f^{-1}\circ g$ be the corresponding
	quasisymmetric conformal welding. Then the following statements are equivalent:
	
	(1) $h\in\operatorname{SS}_{0}(\mathbb{R})$;
	
	(2) $g$ admits a quasiconformal extension to the upper half plane $\mathbb{H}$ with  complex dilatation $\mu$ such that $|\mu(z)|^2/ydm(z)\in\operatorname{CM}_{s}(\mathbb{H})$;
	
	(3) $|S_{g}(z)|^{2}|y|^{3}dm(z)\in\operatorname{CM}_{s}(\mathbb{L})$;
	
	(4) $\log g'\in\operatorname{CMOA}(\mathbb{L})$.
\end{cor}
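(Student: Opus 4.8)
The plan is to close a cycle of implications $(1)\Rightarrow(2)\Rightarrow(3)\Rightarrow(4)\Rightarrow(1)$, exploiting the conformal-invariance machinery of Section 3 together with the boundary correspondence Theorem \ref{4.2} so that almost every step reduces to a result already in hand. For $(1)\Rightarrow(2)$: if $h\in\operatorname{SS}_0(\mathbb{R})=\widetilde{\operatorname{SS}_0}(\mathbb{R})$, then $h$ has a q.c. extension $H$ to $\mathbb{H}$ whose dilatation $\mu$ satisfies $|\mu|^2/y\,dm\in\operatorname{CM}_s(\mathbb{H})$. Transport this extension by the normalized solution of the conformal welding problem exactly as in the proof of Theorem \ref{4.2}: form the global q.c. map $\rho$ with dilatation $\mu$ on $\mathbb{H}$ and conformal on $\mathbb{L}$, write $h=(f_\mu^{-1}\circ g_\mu)|_{\mathbb{R}}$, and observe that $g$ (the reflection $\overline{g_\mu(\bar z)}$, or equivalently $g_\mu$ itself on $\mathbb{L}$) agrees with the given conformal welding datum up to normalization, so $g_\mu$—hence $g$—is conformal on its native half plane and q.c.-extendable across with the stated Carleson bound. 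This is essentially a restatement of what is produced mid-proof in Theorem \ref{4.2}.

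For $(2)\Rightarrow(3)$: given a q.c. extension of $g$ with $|\mu|^2/y\,dm\in\operatorname{CM}_s(\mathbb{H})$, push forward by $\gamma$ to the disk, apply Theorem \ref{3.7} to land in $\operatorname{CM}_0(\mathbb{D})$, invoke the $T_v(S^1)$ theory \cite[Theorem 4.1]{SW} to get $|S_{g\circ\gamma^{-1}}|^2(|w|^2-1)^3\in\operatorname{CM}_0(\mathbb{D}^*)$, and pull back via Theorem \ref{3.7} again to obtain $|S_g|^2|y|^3\,dm\in\operatorname{CM}_s(\mathbb{L})$; this is again a step already carried out verbatim in the proof of Theorem \ref{4.2}. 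For $(3)\Rightarrow(4)$: from $|S_g|^2|y|^3\,dm\in\operatorname{CM}_s(\mathbb{L})$ deduce $S_g\in A_0(\mathbb{L})$ by Proposition \ref{3.10}, hence $\log g'\in B_0(\mathbb{L})$ by the little Teichm\"uller space result \cite[Theorem 3.2]{STW}; then run the long estimate in the proof of Theorem \ref{4.2}—the decomposition $J(\zeta)\simeq J_1+J_2+J_3$ and the case analysis over $Q_1\cup Q_2\cup Q_3$—which shows $|N_g|^2 y\,dm\in\operatorname{CM}_s$, i.e. $\log g'\in\operatorname{CMOA}$ by Corollary \ref{3.9}. (After reflecting back and forth between $\mathbb{H}$ and $\mathbb{L}$ these statements are interchangeable.)

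Finally $(4)\Rightarrow(1)$: assuming $\log g'\in\operatorname{CMOA}(\mathbb{L})$, I would first note that (4) also forces (3) and (2) by reversing the equivalences in Corollary \ref{3.9} and the reflection step, so $g$ has a good q.c. extension; equivalently, argue as at the end of the proof of Theorem \ref{4.2}: since $h^{-1}=(g^{-1}\circ f)|_{\mathbb{R}}$ has normalized welding $\{g,f\}$ and the same argument yields $\log f'\in\operatorname{CMOA}(\mathbb{H})$, the identity $\log h'=\log g_\mu'-\log f_\mu'\circ h$ on $\mathbb{R}$ together with Corollary \ref{3.9} and the fact that the pull-back operator $P_h$ preserves $\operatorname{CMO}(\mathbb{R})$ (which holds once we know $h\in\operatorname{SQS}(\mathbb{R})$) gives $\log h'\in\operatorname{CMO}(\mathbb{R})$, hence $h\in\operatorname{SS}_0(\mathbb{R})$. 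The one genuinely delicate point—the same obstacle as in Theorem \ref{4.2}—is keeping the uniformity in $\zeta$ over the non-compact part of $\mathbb{H}$ under control in the step $(3)\Rightarrow(4)$; the rest is bookkeeping across the Cayley transform and the reflection $z\mapsto\bar z$, so the main task is to state carefully that all the intermediate objects are the reflected/transplanted versions of those appearing in Theorem \ref{4.2} and then cite that proof rather than repeat it.
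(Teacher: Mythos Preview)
Your chain $(1)\Rightarrow(2)\Rightarrow(3)\Rightarrow(4)$ coincides with the paper's: it records $(1)\Leftrightarrow(2)$ as a restatement of Theorem~\ref{1.1} (post-composing $h$'s extension with the conformal $f$ leaves the dilatation unchanged), obtains $(2)\Leftrightarrow(3)$ by the Cayley transfer via Theorem~\ref{3.7} and \cite[Theorem~4.1]{SW}, and cites the long $J(\zeta)$ estimate inside the proof of Theorem~\ref{4.2} for $(3)\Rightarrow(4)$.

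The difference, and the one real gap, is in how you close the loop. Your claim that ``(4) forces (3) by reversing the equivalences in Corollary~\ref{3.9}'' does not go through as stated: applying Corollary~\ref{3.9} to $\log g'$ gives only $|N_g|^2|y|\,dm\in\operatorname{CM}_s(\mathbb{L})$, not the Schwarzian condition $|S_g|^2|y|^3\,dm\in\operatorname{CM}_s(\mathbb{L})$. And your alternative---running the tail of the proof of Theorem~\ref{4.2} to reach (1) directly---is circular: that passage obtains $\log f'\in\operatorname{CMOA}$ only after invoking $h^{-1}\in\widetilde{\operatorname{SS}_0}(\mathbb{R})$ via Lemma~\ref{4.1}, which already presupposes $h\in\widetilde{\operatorname{SS}_0}(\mathbb{R})$, i.e.\ condition~(2). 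Starting from (4) alone you know nothing about $f$.

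The paper closes the cycle instead by a short direct argument for $(4)\Rightarrow(3)$: from $\log g'\in\operatorname{CMOA}(\mathbb{L})$ one has $|N_g|^2|y|\in\operatorname{CM}_s(\mathbb{L})$ by Corollary~\ref{3.9} and $\log g'\in B(\mathbb{L})$ by Proposition~\ref{3.10}; then the identity $S_g=N_g'-\tfrac12 N_g^2$ yields
\[
|S_g|^2|y|^3\ \lesssim\ |N_g'|^2|y|^3+|N_g|^2|y|,
\]
and Proposition~\ref{3.11} upgrades $|N_g|^2|y|\in\operatorname{CM}_s$ to $|N_g'|^2|y|^3\in\operatorname{CM}_s$. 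This is the missing ingredient in your $(4)\Rightarrow(1)$; once you insert it, your cycle and the paper's scheme $(1)\Leftrightarrow(2)\Leftrightarrow(3)$, $(3)\Leftrightarrow(4)$ are equivalent.
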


\begin{proof}
	$(1)\Longleftrightarrow(2)$ is a restatement of  Theorem \ref{1.1}, while $(3)\Longrightarrow(4)$ is contained in the proof of   Theorem \ref{4.2}.
	$(2)\Longleftrightarrow(3)$ is due to the conformal invariance of $\operatorname{CM}_{s}(\mathbb{H})$, as have been used  in in the proof of   Theorem \ref{4.2}.
	
	$(4)\Longrightarrow(3)$ follows from a standard argument.  Noting that  $\log g'\in B(\mathbb{L})$ by Proposition \ref {3.10}, we obtain
	\begin{equation}
		|S_{g}(z)|^{2}|y|^{3}\lesssim |N'_{g}(z)|^{2}|y|^{3}+|N_{g}(z)|^{2}|y|.
	\end{equation}
Since $\log g'\in\operatorname{CMOA}(\mathbb{L})$, $|N_{g}(z)|^{2}|y|\in\operatorname{CM}_{s}(\mathbb{L})$ by Corollary \ref{3.9}, which implies $|N'_{g}(z)|^{2}|y|^{3}\in\operatorname{CM}_{s}(\mathbb{L})$ by Proposition \ref{3.11}. Thus, $|S_{g}(z)|^{2}|y|^{3}dm(z)\in\operatorname{CM}_{s}(\mathbb{L})$.
\end{proof}

Finally, we point out that the paper is an expanded version of Chapter 6 of the Thesis (Ph.D.) \cite{Liu} by the first named author. After the major part of this research  was  completed, the authors got to know that in his Thesis \cite{S0} Semmes has already proved the part $(1)\Longleftrightarrow(4)$ of Corollary \ref {4.3} by a very different approach.


\begin{thebibliography}{99}\footnotesize\addtolength{\itemsep}{-2ex}
\bibitem{AZ} Astala, K.,  Zinsmeister, M., Teichm\"uller spaces and BMOA, \emph{Math. Ann.}, 1991, \textbf{289}, 613--625.
\bibitem{BA} Beurling, A., Ahlfors, L.V., The boundary correspondence under quasiconformal mappings, \emph{Acta Math.}, 1956, {\bf 96},  125--142.
\bibitem{BLS} Bourdaud, G., Lanza de Cristoforis, M., Sickel, W., Functional calculus on BMO and related spaces, \emph{J. Funct. Anal.},  2002, \textbf{189}(2), 515--538.
\bibitem{Bo} Bourdaud G., Remarques sur certains sous-espaces de $ BMO ({\mathbb {R}}^ n) $ et de $ bmo ({\mathbb {R}}^ n) $, \emph{Ann. Inst. Fourier}, 2002, \textbf{52}(4), 1187--1218.
\bibitem{Ca} Carleson, L., On mappings, conformal at the boundary, J. Analyse Math. 1967, \textbf{52}(4),  1--13.
\bibitem{CF} Coifman, R., Fefferman, C., Weighted norm inequalities for maximal functions and singular integrals, \emph{Studia Math.},  {\bf 51}, 241--250, 1974.
\bibitem{CW}  Coifman, R., Weiss, G., Extensions of Hardy spaces and their use in analysis, \emph{Bull. Amer. Math. Soc.}, 1977, \textbf{83}, 569--645.
\bibitem{FKP} Fefferman, R., Kenig, C., Pipher. J., The theory of weights and the Dirichlet problems for elliptic equations, \emph{Ann. of Math.} (2), 1991, \textbf{134}, 65--124.
\bibitem{GS}  Gardiner, F.P., Sullivan, D., Symmetric structures on a closed curve, \emph{Amer. J. Math.}, \textbf{114}, 1992, 683--736.
\bibitem{Gar} Garnett, J.B., Bounded Analytic Functions, {Academic Press}, New York, 1981.
\bibitem{Gi} Girela, D., Analytic functions of bounded mean oscillation, \emph{Complex function spaces}, 2001, \textbf{4}, 61--170.
\bibitem{Jon} Jones, P.W., Homeomorphisms of the line which preserve BMO, \emph{Ark. Mat.}, 1983, \textbf{21}, 229--231.
\bibitem{Liu} Liu, T., The Cauchy integral on chord-arc curves and the related research, Thesis (Ph.D.)-Soochow University, 2023.
\bibitem{Ne} Neri, U., Fractional  integration  on the space  $ H^1 $ and its dual, \emph{Studia Math.}, 1975, \textbf{53}, 175--189.
\bibitem{S0} Semmes, S., The Cauchy integral and related operators on smooth curves, Thesis (Ph.D.)-Washington University in St. Louis, 1983, 1--133.
\bibitem{S2} Semmes, S., Quasiconformal mappings and chord-arc curves, \emph{Tran. Amer. Math. Soc.}, {\bf 306},  233--263, 1988.
\bibitem{Sh} Shen, Y., VMO-Teichm\"uller space on the real line, \emph{Ann. Fenn. Math.}, 2022, \textbf{47}(1), 57--82.
\bibitem{STW} Shen, Y., Tang, S., Wu, L.,  Weil-Petersson and little Teichm\"uller spaces on the real line. \emph{Ann. Fenn. Math.}, 2018, \textbf{43}(2), 935--943.
\bibitem{SW}  Shen, Y., Wei, H., Universal Teichm\"uller space and BMO, \emph{Adv. Math.}, 2013, \textbf{234}, 129--148.
\bibitem{St} Stein, E. M., Singular Integrals and Differentiability Properties of functions, Princeton Univ. Press, Princeton, N.J., 1970.
\bibitem{Uc} Uchiyama, A.,  On the compactness of operators of Hankel type,  \emph{Tohoku Math. J.}, 1978, \textbf{30},  163--171.
\bibitem{WM1} Wei, H., Matsuzaki, K., Symmetric and strongly symmetric homeomorphisms on the real line with non-symmetric inversion, \emph{Anal. Math. Phys.}, 2021, \textbf{11}(2), 1--17.
\bibitem{WM2} Wei, H., Matsuzaki, K., Beurling-Ahlfors extension by heat kernel, $ A_\infty $ weights for VMO, and vanishing Carleson measures, \emph{Bull. Lond. Math. Soc.}, 2021, \textbf{53}(3), 723--739.
\bibitem{WM3}  Wei, H., Matsuzaki, K., Strongly symmetric homeomorphisms on the real line with uniform continuity, \emph{Indiana Univ. Math. J.}, 2023, \textbf{72}, 1553-1576.
\bibitem{Zh} Zhu, K., Operator Theory in Function Spaces, Amer. Math. Soc. Press, Providence,  2007.


\end{thebibliography}
\end{document}